%
%
%
\documentclass[11pt]{amsart}
\addtolength{\textwidth}{2cm}
\addtolength{\evensidemargin}{-1cm}
\addtolength{\oddsidemargin}{-1cm}
\usepackage{amsmath,amsthm,amsfonts,amssymb,verbatim,eucal}
\usepackage{ graphics, color}
\usepackage[all,line]{xy}
\usepackage{enumerate}

\numberwithin{equation}{section}

\newtheorem{thm}[equation]{Theorem} 
\newtheorem{prop}[equation]{Proposition}
\newtheorem{lemma}[equation]{Lemma} 
\newtheorem{cor}[equation]{Corollary}
\newtheorem{example}[equation]{Example}
\newtheorem{remark}[equation]{Remark}
\newtheorem{definition}[equation]{Definition}

\newenvironment{ex}{\begin{example}\rm}{\end{example}}
\newenvironment{rem}{\begin{remark}\rm}{\end{remark}}

\newcommand{\diag}{\text{diag}}

\newcommand{\NN}{\mathbb N}

\renewcommand{\k}{{{\mathbb K}}}
\newcommand{\kG}{{{\mathbb K}G}}

\newcommand{\Z}{{\mathbb Z}}
\newcommand{\N}{{\mathbb N}}
\newcommand{\id}{\mbox{\rm id\,}}      
\newcommand{\Hom}{\mbox{\rm Hom\,}}

\newcommand{\C}{\mathcal C}
\newcommand{\U}{\mathcal{U}}

\newcommand{\iso}{\cong}

\newcommand{\ot}{\otimes}

\newcommand{\cH}{\mathcal{H}}
  
\newcommand{\ZZ}{\mathbb{Z}}

\newcommand{\tensor}{\otimes}

\newcommand{\Hqk}{{\mathcal {H}}_{{\bf{q}},\kappa}}


\begin{document}

\title[Color Lie rings]
{Color Lie rings and PBW deformations\\ of skew group algebras}

\date{January 26, 2018}

\author{S. Fryer}
\address{Department of Mathematics, University of California Santa Barbara,
CA 93106, USA}
\email{sianfryer@math.ucsb.edu} 
\author{T. Kanstrup}
\address{Max Planck Institute for Mathematics, Vivatsgasse 7, 
53111 Bonn, Germany}
\email{tina.kanstrup@mail.dk}
\author{E. Kirkman}
\address{Department of Mathematics, PO\ Box 7388, Wake Forest
University, Winston-Salem, NC 27109, USA}\email{kirkman@wfu.edu} 
\author{A.V.\ Shepler}
\address{Department of Mathematics, University of North Texas,
Denton, TX 76203, USA}
\email{ashepler@unt.edu}
\author{S.\  Witherspoon}
\address{Department of Mathematics\\Texas A\&M University\\
College Station, TX 77843, USA}\email{sjw@math.tamu.edu}


\maketitle

\begin{abstract}
We investigate color Lie rings over finite group algebras and
their universal enveloping algebras.
We exhibit these universal enveloping algebras as PBW deformations of skew group algebras:
Every color Lie ring over a finite group algebra
with a particular Yetter-Drinfeld structure has universal enveloping
algebra that is a quantum Drinfeld orbifold algebra.
Conversely, every quantum Drinfeld orbifold algebra of
a particular type arising from the action of an abelian group
is the universal enveloping algebra of some color Lie ring
over the group algebra.
One consequence is that these quantum Drinfeld orbifold algebras
are braided Hopf algebras.
\end{abstract}


\section{Introduction}

We examine color Lie rings arising from finite groups
acting linearly on finite dimensional vector spaces.
We show that color Lie rings with Yetter-Drinfeld structures
have universal enveloping
algebras that are PBW deformations of skew group rings
(i.e., semidirect product algebras) under
a mild condition on their colors.
We consider only abelian groups since antisymmetry of the Lie 
bracket implies that any grading group is abelian. 
Specifically, 
we consider a color Lie ring 
arising from a finite abelian group
$G$ acting diagonally on a finite dimensional vector space 
$V$ over a field $\k$, and we exhibit
its universal enveloping algebra
as a quantum Drinfeld orbifold algebra.

Quantum Drinfeld orbifold algebras~\cite{Shroff} generally
are filtered PBW deformations
of skew group algebras $S_{\bf {q}}(V)\rtimes G$
formed by finite groups $G$ acting on quantum symmetric algebras
(skew polynomial rings) $S_{\bf {q}}(V)$ by graded automorphisms.
They include as special cases, for example, the
\begin{itemize}
  \item
universal enveloping algebras of Lie algebras
(with ${\bf q}$ and $G$ both trivial),
  \item
symplectic reflection algebras
(with ${\bf q}$ trivial and $G$ symplectic)~\cite{EtingofGinzburg},
\item
  graded affine Hecke algebras and
  generalizations to quantum polynomial rings
  (with ${\bf q}$ and
  $G$ nontrivial)~\cite{LevandovskyyShepler,Lusztig},
and
\item
generalized universal enveloping algebras of color Lie algebras
(with $G$ trivial)~\cite{ShroffWitherspoon}.
\end{itemize}
When the group $G$ is nontrivial,
previous work has concentrated on the case
of nonhomogeneous quadratic algebras
whose defining relations set
commutators of vector space elements equal to elements in the group
algebra $\k G$, i.e., $[v, v]\in \k G$ in the algebra.
Alternatively, here we require
commutators of vector space elements to equal
elements in the space  $V\ot \k G$; our algebras then will be 
direct generalizations of universal enveloping algebras of Lie
algebras (the origin of the term ``PBW deformation'').

We examine closely this latter type of PBW deformation
of $S_{\bf{q}}(V)\rtimes G$ in characteristic~0, searching for  Lie structures.
In Section~\ref{sec:tau}, starting with an abelian group $A$, an antisymmetric  bicharacter $\epsilon$ on $A$, and a ring $R$, we define the notion of an $(A,\epsilon)$-{\em color Lie ring} over $R$ and its universal enveloping algebra.  
When a finite abelian group $G$ acts {\em diagonally}
on $V=\k^n$ with basis $v_1,\ldots, v_n$,
the space 
$$
L=V\ot \kG = \bigoplus_{i, g} \k (v_i\ot g)
$$
is naturally graded as a vector space
by the abelian group $A=\ZZ^n\times G$.
We define an antisymmetric bicharacter on $A$
and show that $L = V \otimes \k G$ is a Yetter-Drinfeld module over $\kG$,
setting the stage for the {\em Yetter-Drinfeld color Lie rings} 
defined in Section~\ref{sec:tau} and featured in later sections.

In Section~\ref{sec:qdoa},
we define deformations of $S_{\bf{q}}(V)\rtimes G$
corresponding to parameter functions  
$\kappa: V \times V \rightarrow V \otimes \kG$: 
\begin{equation*}
\mathcal{H}_{\bf q, \kappa}:=
(T(V)\rtimes G)/\big(v_i v_j - q_{ij} v_j v_i-\kappa(v_i,v_j):
1\leq i,j\leq n\big).
\end{equation*}
We analyze
conditions for $\mathcal{H}_{\bf q, \kappa}$ to satisfy the PBW property,
i.e., to be a PBW deformation of $S_{{\bf q}}(V)\rtimes G$.
Then we present several examples that 
meet these conditions on $\kappa$, the group action, and the parameter set ${\bf{q}} =\{q_{ij}\}$.
Under these conditions, we call $\mathcal{H}_{\bf{q},\kappa}$ a 
{\em quantum Drinfeld orbifold algebra}. 
In Sections~\ref{FirstMainThm} and~\ref{gradedcolorLierings},
we determine conditions 
on the algebra $\mathcal{H}_{\bf q, \kappa}$
under which an underlying 
subspace $L 
=V \ot \kG$
is a color Lie ring over $\kG$, generalizing a result 
in~\cite{ShroffWitherspoon} to nontrivial $G$.
We show that $\mathcal{H}_{\bf q, \kappa}$ is isomorphic to the universal enveloping algebra $\U(L)$ and, in Section~\ref{sec:cat-th},
exhibit a Hopf algebra structure on 
$\mathcal{H}_{\bf q, \kappa}$.

\vspace{4ex}

\noindent
    {\bf Example}.
    Let $V=\k^3$ with basis $v_1,v_2,v_3$
    and 
    suppose
    $\k$ contains a primitive $m$-th root of unity $q$
     for some $m\geq 2$.
     Let $g$ be the diagonal matrix $\diag(q,1,1)$
     and set $G=\left<g\right>$.
     Then $L = V\ot \kG$ is a {\em color Lie ring} over $\kG$ with
     brackets determined by
\[
    [v_1\ot 1,v_2\ot 1] = v_1 \ot g , \ \ \ 
   [v_1\ot 1,v_3\ot 1]=0, \ \ \ [v_2\ot 1,v_3\ot 1] = 0 \, .
\]
The universal enveloping algebra $\mathcal{U}(L)$ is a PBW deformation
of $S_{{\bf q}}(V)\rtimes G$ for a choice of parameter set ${{\bf q}}$. 
Details and more examples are in Section~\ref{sec:qdoa}.

\vspace{4ex}

Color Lie rings carry an additional grading by $\ZZ/2\ZZ$,
and we may speak of their positive and negative
parts with respect to this grading.
In Section~\ref{SecondMainThm}, we determine
how color Lie rings with only positive part
define deformations of the noncommutative algebra
$S_{\bf q}(V)\rtimes G$.
The following  result is our main Theorem~\ref{thm:main-recap}.

\medskip

\noindent
{\bf Theorem.} 
{\em 
Let $G$ be a finite group of diagonal matrices acting
on $V=\k^n$.
\begin{itemize}
\item[(i)]
The universal enveloping algebra
of any Yetter-Drinfeld color Lie ring
with purely positive part
is isomorphic to a
quantum Drinfeld orbifold algebra $\cH_{{\bf q},\kappa}$ for 
some parameters ${\bf q}$ and $\kappa$.
\vspace{1ex}
\item[(ii)]
Any quantum Drinfeld orbifold algebra $\cH_{{\bf q},\kappa}$ with
$\kappa: V \times V \rightarrow V \otimes \kG$
satisfying an additional condition 
is isomorphic to the universal enveloping algebra $\mathcal{U}(L)$ of some
Yetter-Drinfeld color Lie ring $L$.
\end{itemize}
}

\medskip


Part (i) shows that certain color Lie rings $L$ over $\kG$ have
universal enveloping algebras $\U(L)$
defining PBW deformations of $S_{\bf q}(V)\rtimes G$. 
This immediately implies a PBW theorem for these algebras. 
Part~(ii) exhibits many quantum Drinfeld orbifold algebras
as the universal enveloping algebras of color Lie rings.
In Section~\ref{sec:cat-th}, we regard color Lie rings $L$  
as Lie algebras in certain braided symmetric monoidal categories, 
and thus
their universal enveloping algebras $\U(L)$
inherit the structure of {\em braided Hopf algebras}. 
An immediate consequence is that certain
quantum Drinfeld orbifold algebras are
braided Hopf algebras.
We  obtain the following corollaries
(Corollary~\ref{cor:Hopf-alt}
and~\ref{lastcor}).

\medskip

\noindent
{\bf Corollary.} {\em 
 A quantum Drinfeld orbifold algebra $\cH_{{\bf q},\kappa}$
with $G$ abelian acting diagonally 
on $V$ and $\kappa: V \times V \rightarrow V \otimes \kG$
satisfying an extra condition 
is a braided Hopf algebra.}

\medskip

\noindent
{\bf Corollary.}
{\em 
Let $L$ be a Yetter-Drinfeld color Lie ring
that is purely positive.
Then its universal enveloping algebra $\U(L)$ has the PBW property.}

\medskip

Throughout, $\k$ will be a field of characteristic~0
unless stated otherwise. 
An unadorned tensor symbol between modules is understood to be
a tensor product over $\k$,
that is, $\otimes =\otimes_{\k}$. 
The symbol $\N$ denotes the set of natural numbers, 
which we assume includes $0$.
In any algebra containing a group algebra $\kG$, we
identify the unity of the field $\k$
with the identity of $G$: $1=1_\k=1_G$.

\section{Color Lie rings and
universal enveloping algebras}
\label{sec:tau}

In this section, we introduce color Lie rings and their universal
enveloping algebras. 

\subsection*{Gradings}
Let $A$ be an abelian group.
A $\k$-algebra $R$ is $A$-graded if 
$R=\oplus_{a\in A} R_a$ as a vector space and 
$R_aR_b \subset R_{ab}$
for all $a,b\in A$.
An {\em $A$-graded $R$-bimodule} is an $R$-bimodule $L$ together with a 
decomposition $L = \oplus_{a\in A} L_a$
as a direct sum of vector spaces $L_a$
such that $R_a L_b R_c \subset L_{abc}$ for all $a,b,c\in A$. 
If $0\neq x\in L_a$, we write $|x|=a$ and say $x$
is {\em $A$-homogeneous}. We define $|0|$ to be the identity $1_A$ of $A$ 
(note $0\in L_a$ for all $a\in A$).

If an abelian group $A$ is grading a group algebra $R=\k G$,
then we make the additional assumption
that each $g$ in $G$ is $A$-homogeneous and we 
say that {\em $G$ is graded by $A$}. 

\subsection*{Color Lie rings}
A function $\epsilon: A\times A\rightarrow \k^*$
is a {\em bicharacter} on an abelian group 
$A$ if
$$\epsilon(a,bc)=\epsilon(a,b)\epsilon(a,c)
\quad\text{and}\quad
\epsilon(ab,c)=\epsilon(a,c)\epsilon(b,c)\
\quad\text{ for all }
a,b,c\in A\ . $$
A bicharacter $\epsilon$ is {\em antisymmetric} if 
$\epsilon(b,a)= \epsilon(a,b)^{-1}$ for all $a,b\in A$. 
Note that any antisymmetric bicharacter $\epsilon$
satisfies (for all $a,b$ in $A$)
$$\epsilon(a,a)=\pm1,\quad \epsilon(a,1_A)=\epsilon(1_A,a)=1,
\quad\epsilon(a,b^{-1})=\epsilon(a^{-1},b)=(\epsilon(a,b))^{-1}\  .$$
\begin{definition}\label{def:colorLie}
{\em 
Let $A$ be an abelian group and let $\epsilon$ be an antisymmetric bicharacter on $A$. 
Let $R$ be an $A$-graded $\k$-algebra. 
An {\em $(A,\epsilon)$-color Lie ring}  over $R$  is
an $A$-graded $R$-bimodule $L$ equipped with an $R$-bilinear, 
$R$-balanced operation $[ \ , \ ]$
for which
\begin{itemize}
\item[(i)]\rule{0ex}{2ex}
$[x,y]\subset L_{|x||y|}$
  \ \ {\em ($A$-grading)},
\item[(ii)]\rule{0ex}{2.5ex}
 $[x,y] = - \epsilon ( |x|,|y|)\ [y,x]$\ \ {\em ($\epsilon$-antisymmetry)}, and,
\item[(iii)]\rule[-1.5ex]{0ex}{4ex}
 $0=\epsilon(|z|,|x|)\, [x, [y,z]] + \epsilon(|x|,|y|)\,
  [y,[z,x]] + \epsilon(|y|,|z|)\, [z, [x,y]]$
   \   {\em ($\epsilon$-Jacobi identity)}
\end{itemize}
for all 
$A$-homogeneous $x,y,z\in L$.
We say $L$ is an {\em ungraded color Lie ring} if it 
only satisfies (ii) and (iii) above.
}\end{definition}

Color Lie rings generalize other Lie algebras:
\begin{itemize}
\item
When $A=1$, a color Lie ring is the usual notion of Lie ring over $R$. 
\item
When $R=\k$ (a field), a color Lie ring is the usual notion of color Lie algebra
or color Lie superalgebra.
\item
When $R=\k$ and $A= \Z/2\Z$, a color Lie ring is a Lie superalgebra. 
We give one such example next. 
\end{itemize}
Note that parts~(i) and~(ii) in the definition a color Lie ring $L$ 
force the group $A$ to be abelian since 
$[L_a, L_b]\subset L_{ab}$ and $[L_b,L_a] \subset L_{ba}$
for all $a,b$ in $A$.

\vspace{3ex}

\begin{ex}\label{superalg}
$L= {\mathfrak{gl}}(1 | 1)$ is the Lie superalgebra defined as follows.
Let $R = \k $ be a field of characteristic not~$2$. 
Let $A = \Z/2\Z = \{ 0,1\}$ and $\epsilon(a,b) = (-1)^{ab}$
for all $a,b$ in $A$.
Let $L$ be the set of $2\times 2$ matrices
$M =(m_{ij})$ over $\k$, 
where for  $M \neq 0$ we define $|M|=0$ if $m_{12}=m_{21}=0$, and 
$|M|=1$ if $m_{11}=m_{22}=0$.
Define $[ M,M']$ to be $MM' - \epsilon(|M|,|M'|) M'M$ if $M,M'$ are $A$-homogeneous; then $L$ is a color Lie ring.  In fact, one can define a color Lie ring for any $A$-graded associative algebra $B$ by replacing the usual commutator with the color bracket $[u,v] = uv - \epsilon(|u|,|v|) vu$ for homogeneous elements $u,v \in B$ (\cite[\S 3, p.~723]{KharchenkoShestakov}).
Note that in $L= {\mathfrak{gl}}(1 | 1)$ there are matrices $M$ for which $[M,M]\neq 0$, for example,
let $M = (m_{ij})$ with $m_{11}=m_{22}=0$ and both $m_{12}\neq 0$ and $m_{21}\neq  0$. 
Also note that if $M = (m_{ij})$ with $m_{11}=m_{12}=m_{22}=0$ and $m_{21}\neq 0$,
then $[ M,M ]=0$, but $\epsilon(|M|,|M|)=-1$,
forcing $M^2=0$ in the universal enveloping algebra $\mathcal{U}(\mathfrak{gl}(1 | 1))$
 (see Definition~\ref{def of env alg}   below), so that $\mathcal{U}(\mathfrak{gl}(1 | 1))$
 has nilpotent elements.
\end{ex}

\vspace{3ex}

We will see later in Section~\ref{sec:cat-th} that color Lie rings are
Lie algebras in symmetric monoidal categories. 

\vspace{3ex}

\subsection*{Color universal enveloping algebras}
There is a notion of universal enveloping algebra for color Lie rings: 
\begin{definition}\label{def of env alg}{\em 
Let $L$ be a color Lie ring (either graded or ungraded) over $R$. 
Consider its tensor algebra $T_R(L)=\bigoplus_{n\geq 0} L^{\otimes_R n}$  and let $J$ be the ideal in $T_R(L)$ defined by generators as 
\[ 
   J := \left(\{ u \otimes_R v-\epsilon(|u| , |v|)\ v\ot_R u -[u,v] : 
    \mbox{ $A$-homogeneous } u,v \mbox{ in } L \} \right)  . \]
The {\em universal enveloping algebra} of $L$ is the 
quotient
\[ \U(L):=T_R(L)/J. \]
}\end{definition}

\subsection*{Yetter-Drinfeld color Lie rings}\label{sec:Yetter}
A special case of color Lie ring arises from groups
acting linearly
on finite dimensional vector spaces.
Suppose $V\cong \k^n$ and $G$ is a finite group acting linearly on $V$.  
We use left superscript to denote the group action in
order to distinguish it from multiplication in a corresponding
skew group algebra, writing $\ ^gv$ for the action of
$g$ in $G$ on $v$ in $V$.

Set $R=\kG$ and view $L=V\ot \kG$  as  an $R$-bimodule via the
natural action
\begin{equation}\label{eqn:YD2}
    g (v\ot h)g' = {}^gv\ot ghg' 
    \quad\text{for all }\ g,g',h\in G, \ v\in V
    .
\end{equation} 
When $G$ is abelian, this bimodule structure
gives rise to a special kind of color Lie ring over $\kG$,
one that is compatible with a Yetter-Drinfeld structure,
as we define next.

\begin{definition}\label{def:YDcolorLie}
{\em
  We say that an $(A,\epsilon)$-color
  Lie ring $L=V\ot\kG$
over $\kG$ (with $\kG$-bimodule structure
above) is {\em Yetter-Drinfeld}
if \begin{itemize}
\item[(i)] 
the $A$-grading on $L$ is induced from $A$-gradings on $G$
and on $V$, i.e., 
$$|v\ot g|=|v||g| \quad
\text{for all } g \in G \text{ and $A$-homogeneous }
v \text{ in } V  , \text{ and}$$
\item[(ii)] ${}^g v = \epsilon ( |g|, |v|)\, v$
for all $g$ in $G$ and $A$-homogeneous $v$ in $V$.
\end{itemize}
We make the same definition
for ungraded color Lie rings.}
\end{definition}

When a color Lie ring $L$ is Yetter-Drinfeld,
we may choose a basis $v_1, \ldots, v_n$ of $V$
which is $A$-homogeneous since
$V$ is $A$-graded as a $\kG$-bimodule.  Condition~(ii)
in the definition then guarantees that the abelian group $G$ acts diagonally
with respect $v_1,\ldots, v_n$.

\subsection*{Yetter-Drinfeld modules}
We mention briefly the connection with Yetter-Drinfeld modules for context.
For $G$ any finite group, a {\em Yetter-Drinfeld $\k G$-module}
is a $\k G$-module $M$ that is also $G$-graded with
$M = \oplus _{g\in G} M_g$ in such a way that
$ {}^g(M_h) = M_{ghg^{-1}}$ for all $g,h$ in $G$.
We may view 
$V\ot \k G$ as a Yetter-Drinfeld module:
It is $G$-graded with $(V\ot \kG) _g = V\ot \k g$
for all $g$ in $G$, 
it is a (left) $\kG$-module via 
\begin{equation}\label{eqn:YD1}
{}^g(v\ot h)=\, ^{g}v\ot ghg^{-1}
\quad\text{for all }\
g,h\in G \text{ and } v\in V\, ,
\end{equation}
and $ {}^g ((V\ot\kG)_h) = (V\ot \kG)_{ghg^{-1}}$ for all $g,h$ in $G$. 
Thus one may consider
the left action~(\ref{eqn:YD1}) to be
an adjoint action of $G$.


\section{Quantum Drinfeld orbifold algebras $\Hqk$}
\label{sec:qdoa} 

In this section, we consider some deformations of skew group 
algebras, prove a PBW theorem needed later,
and give many new examples. 

\subsection*{Quantum systems of parameters} 

We define a {\em quantum system of parameters}
(or {\em quantum scalars} for short)
to be a set of nonzero scalars
${\bf q}:=\{ q_{ij} \}$
with
$q_{ii}=1$  and $q_{ji}=q_{ij}^{-1}$ for all $i, j$ ($1\leq i,j\leq n$).


\subsection*{Quantum symmetric algebras}  
Let $V$ be a $\k$-vector space of dimension $n$,
and fix a basis $v_1,\ldots, v_n$ of $V$.
Let ${\bf{q}}$ be a quantum system of parameters.  
Recall that the {\em quantum symmetric algebra} (or skew polynomial ring)
determined by ${\bf q}$ and $v_1,\ldots, v_n$ 
 is the  $\k$-algebra
$$
S_{\bf q}(V):= \k\left<v_1, \ldots, v_n\right>/(v_i v_j = q_{ij} v_j v_i
\  \text{ for } 1\leq i, j\leq n) \,  .
$$
Note that $S_{\bf q}(V)$ has the {\em PBW property}:
As a $\k$-vector space, $S_{\bf q}(V)$ has
basis 
\[
   \{ v_1^{m_1}v_2^{m_2}\cdots v_n^{m_n}: m_i\in \N \}.
\]

\subsection*{Skew group algebras} 
Let $S$ be a $\k$-algebra on which a finite group $G$ acts by automorphisms.
The {\em skew group algebra} (or semidirect product) $S\rtimes G$ is the $\k$-algebra generated by $S$ and $G$ with multiplication given by
\[
    (r\ g) \cdot (s\ h) := r ({}^gs) \ gh
\]
for all $r,s\in S$ and $g,h\in G$. 
(As a $\k$-vector space, $S\rtimes G$ is isomorphic to $S\ot \kG$.)

We will use this construction particularly in the following setting. 
Let $G$ be a finite group, and let $V$ be a $\kG$-module.
Assume the linear action of $G$ on $V$ extends
to an action by graded automorphisms on $S_{\bf q}(V)$
(letting $\deg v=1$ for all $v\in V$).  In other words,
assume that in $S_{\bf q}(V)$, 
$$
\ ^gv_i\, ^gv_j = q_{ij} \ ^g v_j\, ^gv_i
\quad\text{ for all } i,j \ \text{ and } g \in G\ .
$$
Groups acting diagonally on the choice $v_1,\ldots,v_n$ of basis 
of $V$ always extend to an action on $S_{\bf q}(V)$,
but many other group actions do {\em not} extend.
For our main results, 
we will restrict to diagonal actions.

\subsection*{Parameter functions} 
Let $T(V) = T_{\k}(V)$ be the free algebra over $\k$ 
generated by $V$ (i.e., the tensor algebra 
with tensor symbols suppressed).  The group $G$ acts on $T(V)$ 
by automorphisms (extending its action on $V$). 
Consider a quotient of the skew group algebra  $T(V)\rtimes G$
by relations that lower the degree of the commutators
$v_iv_j - q_{ij}v_jv_i$,
viewing $\kG$ in degree zero.
Specifically, consider  a {\em parameter function} 
\begin{equation}\label{eqn:parfun}
  \kappa: V\times V \rightarrow V\otimes \kG 
\end{equation}
that is bilinear
and {\em quantum antisymmetric}, i.e.,
\[
   \kappa(v_i, v_j)=-q_{ij}\ \kappa(v_j, v_i) \, ,
\]
that records such relations.
We decompose $\kappa$ when convenient, writing
$$
\kappa(v_i, v_j)
=\sum_{g\in G} \kappa_g(v_i, v_j) g
$$
with each $\kappa_g:V\times V \rightarrow V$.
We note that imposing bilinearity is only for convenience; 
values of $\kappa$ on the given basis 
determine the algebra $\mathcal{H}_{{\bf q},\kappa}$
defined by~(\ref{eqn:Hqk}) below.

\subsection*{Quantum Drinfeld orbifold algebras} 

Given a quantum system of parameters ${\bf{q}}$ and a parameter function $\kappa$
as in~(\ref{eqn:parfun}), let
\begin{equation}\label{eqn:Hqk}
\mathcal{H}_{\bf q, \kappa}:=
(T(V)\rtimes G)/\big(v_i v_j - q_{ij} v_j v_i-\kappa(v_i,v_j):
1\leq i,j\leq n\big).
\end{equation}
We say that $\mathcal{H}_{\bf q,\kappa}$
satisfies the {\em PBW property} 
if the image of the set  
$$
\{v_1^{m_1}v_2^{m_2}\cdots v_n^{m_n}g:
m_i\in \N, g\in G\} \, , 
$$
under the quotient map $T(V)\rtimes G\rightarrow \Hqk$, 
is a basis for $\mathcal{H}_{\bf q,\kappa}$
as a $\k$-vector space.
In this case, we say  
 $\mathcal{H}_{\bf q, \kappa}$
is a {\em PBW deformation} of $S_{\bf q}(V)\rtimes G$,
and call $\mathcal{H}_{\bf q, \kappa}$
a  {\em quantum Drinfeld orbifold algebra}. 
We identify every $v\ot g$ in $V\ot\kG$ with its image
$vg$ in $\mathcal{H}_{{\bf q},\kappa}$.

\subsection*{PBW conditions}
We derive necessary and sufficient conditions
for $\cH_{\bf q, \kappa}$ to satisfy the PBW property in terms of properties of $\kappa$.
Related PBW theorems appear in
Shepler and Levandovskyy~\cite{LevandovskyyShepler},
Shepler and Witherspoon~\cite{DOA}, 
and Shroff~\cite{Shroff}.
These previous theorems stated results 
in a way that allowed direct comparison with homological information,
but here we obtain a PBW result in Corollary~\ref{PBWconditions'}
of a different flavor
in order to connect with color Lie rings.

In the theorem below,
we extend
the parameter function $\kappa$ from
domain $V\times V$ to domain $(V\ot \kG)\times (V\ot\kG)$ by setting
$$
   \kappa (vg , wh) = \kappa(v, {}^gw) gh
\quad\text{ for all }g,h\in G\ \text{and }\  v,w\in V .
$$
We say that $\kappa$ is {\em $G$-invariant} in Condition~(1)
of the theorem below if it is invariant
with respect to the {\em adjoint action} of $G$ on the bimodule $V\ot\kG$
(as in Equation~(\ref{eqn:YD1})):
\begin{equation}
  \label{adjointaction}
        {}^g(v\ot h) = \ {}^gv\ot ghg^{-1} =\ {}^g v\ot h
\quad\text{ for all } g,h\in G
\text{ and } v\in V.
\end{equation}
We record the action of $G$ acting diagonally on a fixed basis $v_1,\ldots, v_n$
of $V$ with linear characters $\chi_i:G\rightarrow \k^*$ giving
the $i$-th diagonal entries:
\[
  {}^g v_i =\chi_i(g) \ v_i
\quad  \text{ for } 1\leq i \leq n.
\]
 
\begin{thm}\label{PBWconditions}
Let $G$ be an abelian group acting diagonally 
on $V$ with respect to a $\k$-basis $v_1, \ldots, v_n$.
Let $\kappa: V\times V\rightarrow V\ot \k G$ be a parameter function
defining the algebra
$\Hqk$ as in~(\ref{eqn:Hqk}), and write 
$\kappa(v_i, v_j)=\sum_{1\leq r\leq n,\, g\in G} c_r^{ijg}\ v_r\ot g$ in $V\ot\kG$
for some $c^{ijg}_r\in \k$. 
Then 
$\mathcal{H}_{\bf q, \kappa}$ satisfies the PBW property if
and only if 
\begin{itemize}
\item[(1)]
the function $\kappa$ is $G$-invariant,
\item[(2)]
for all $g$ in $G$ and all distinct $1\leq i,j,k\leq n$,
  $$
\begin{aligned}
  0&=\big(\chi_k(g)-q_{jk}\, q_{ik}\, q_{kr}\big)c_r^{ijg}
 \qquad \text{for any } r \neq i,j\, , \\
0&=\big(1-q_{ij}\chi_i(g)\big)\, c_k^{\, jkg}+\big(q_{jk}-\chi_k(g)\big)\, c_i^{ijg} \, ,
\qquad \text{and} 
\end{aligned}
$$
\item[(3)]
for all $g$ in $G$ and all distinct $1\leq i,j,k\leq n$, 
\rule[0ex]{0ex}{3ex}
\begin{equation*}
\begin{aligned}
  \quad\quad 0 =\
  \sum_{\circlearrowleft}
   \big(\chi_k(g)-q_{jk}\big)\, c_i^{ijg} \kappa(v_k, v_i)
\ + \
\sum_{\circlearrowleft}
q_{jk}\ \kappa\big(v_k,\kappa_g(v_i,v_j)\big) . 
\end{aligned}
\end{equation*}
\end{itemize}
\end{thm}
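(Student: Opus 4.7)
The plan is to apply Bergman's Diamond Lemma to the natural rewriting system on $T(V)\rtimes G$ whose reductions send $v_iv_j \mapsto q_{ij}v_jv_i + \kappa(v_i,v_j)$ for $i>j$ and $gv_i \mapsto \chi_i(g)v_ig$ for $g\in G$. Equivalently, because $S_{{\bf q}}(V)$ is Koszul and $\kG$ is semisimple, one can invoke the Braverman-Gaitsgory-style PBW criterion developed in~\cite{DOA} and~\cite{Shroff}: the algebra $\Hqk$ has the PBW property if and only if all overlap ambiguities in this rewriting system resolve. These split into two kinds, which I will treat in turn: \emph{group-vector} overlaps of the form $g\cdot(v_iv_j)$, and \emph{triple vector} overlaps $v_iv_jv_k$ with $i,j,k$ distinct.

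For the group-vector overlap, reducing $gv_iv_j$ by first pushing $g$ past the pair and then applying the quantum relation, versus first applying the quantum relation to $v_iv_j$ and then pushing $g$ past everything (including the newly generated $\kappa$-term), produces two normal forms whose difference involves $\kappa(v_i,v_j)$, its shift ${}^g\kappa(v_i,v_j)$, and the characters $\chi_i(g),\chi_j(g)$. Since $G$ acts diagonally these characters factor cleanly, and the difference vanishes exactly when ${}^g\kappa(v_i,v_j)=\kappa(v_i,v_j)$, i.e.\ when $\kappa$ is $G$-invariant in the sense of equation~(\ref{adjointaction}); this yields condition~(1).

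For the triple overlap with distinct $i,j,k$ (which quantum antisymmetry of $\kappa$ allows us to order freely), I would fully reduce $v_iv_jv_k$ along both paths $(v_iv_j)v_k$ and $v_i(v_jv_k)$. Each path, after repeated use of the two rewriting rules together with the identities $\kappa(v_i,v_j)v_k=\sum_{r,g}c_r^{ijg}\chi_k(g)\,v_rv_kg$ and $v_k\kappa(v_i,v_j)=\sum_{r,g}c_r^{ijg}\,v_kv_rg$, produces the common leading term $q_{ij}q_{ik}q_{jk}v_kv_jv_ig$ plus a sum of degree-two $v_av_bg$-terms and degree-one $v_cg$-terms (the latter arising both from residual one-level $\kappa$-applications after normal-ordering, and from genuine iterated applications $\kappa(v_\bullet,\kappa_g(v_\bullet,v_\bullet))$). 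The difference of the two reductions must vanish coefficient by coefficient on the PBW basis $\{v_1^{m_1}\cdots v_n^{m_n}g\}$ of $S_{{\bf q}}(V)\rtimes G$.

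Sorting the degree-two part by basis monomial gives condition~(2): coefficients of $v_rv_kg$, $v_rv_jg$, $v_rv_ig$ with $r\notin\{i,j,k\}$ produce its first equation, and coefficients of the mixed monomials $v_jv_kg$, $v_iv_kg$, $v_iv_jg$ (after reordering via $v_av_b=q_{ab}v_bv_a$) produce its second. The degree-one part assembles, after the cyclic summation naturally built into the difference of the two paths, into condition~(3): its first $\sum_\circlearrowleft$ collects the secondary $\kappa$'s generated when $v_rv_k$ is reordered inside $\kappa(v_i,v_j)v_k$ (accounting for the $c_i^{ijg}\kappa(v_k,v_i)$ shape), while its second $\sum_\circlearrowleft$ collects the iterated $\kappa\circ\kappa$ contributions. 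The main obstacle will be bookkeeping: faithfully tracking the characters $\chi_\bullet(g)$ accumulated each time an element of $V\otimes\kG$ is pushed past $v_i,v_j,v_k$, and separating the vanishing coefficient by coefficient on the PBW basis. Once the $G$-invariance forced by condition~(1) is in hand, these $\chi$-factors attach cleanly to the $c_r^{ijg}$ and the stated form of conditions~(2) and~(3) falls out.
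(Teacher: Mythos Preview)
Your proposal is correct and follows essentially the same route as the paper's proof: the Diamond Lemma applied to the rewriting system on $T(V)\rtimes G$, with the group--vector overlaps yielding condition~(1) and the triple-vector overlaps $(v_kv_j)v_i=v_k(v_jv_i)$ yielding conditions~(2) and~(3) after separating degree-$2$ and degree-$1$ terms on the PBW basis. One small slip: the resolution of the group--vector overlap gives ${}^g\kappa(v_i,v_j)=\kappa({}^gv_i,{}^gv_j)$, not ${}^g\kappa(v_i,v_j)=\kappa(v_i,v_j)$ as you wrote; your reference to equation~(\ref{adjointaction}) shows you have the right notion in mind, so this is just a typo to fix when you write out the details.
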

\begin{proof}
We use the Diamond-Composition Lemma as outlined 
in~\cite{LevandovskyyShepler}, 
~\cite{DOA},
and~\cite{Shroff} to write elements of $\Hqk$ in PBW form, up to cyclic permutation.
We consider ``overlaps'' arising from
$0={}^h (v_i v_j) - ({}^h v_i)( {}^hv_j)$ to obtain
the condition that
$ \kappa_g ( {}^hv_i , {}^h v_j)
=\ {}^h\big(\kappa_g (v_i,v_j)\big) $ 
for all $i,j$ and all $g,h$ in $G$,
yielding the first condition of the theorem.
(See~\cite[Prop.~9.3]{LevandovskyyShepler} 
for a substitute for $G$-invariance
when $G$ does not act diagonally.)

We consider overlaps arising from
$(v_k v_j) v_i = v_k (v_j v_i)$
to obtain the second and third conditions.
We use the relations of the algebra
to write 
$0=(v_k v_j) v_i - v_k (v_j v_i)$
in the span of the PBW basis
$\{v_1^{m_1}\cdots v_n^{m_n}g: m_i\in\NN, g\in G\}$.
We put $G$ in degree~$0$
and observe that straightforward calculations
show that
terms of degree~3 
in $v_1, \ldots, v_n$ cancel.
Since the group acts diagonally, 
direct calculations give the 
coefficient of a fixed $g$ in $G$
(as an element of $T(V)\ot \kG$) as
\begin{equation*}
\begin{aligned}
\sum_{\circlearrowleft} 
q_{jk}\ v_k\ \kappa_g(v_i, v_j) 
-q_{ki}\ \chi_k(g)\ \kappa_g(v_i, v_j)\ v_k\, . \\
\end{aligned}
\end{equation*}
We assume $i,j,k$ are distinct as this expression
trivially vanishes otherwise. We expand each 
$\kappa_g(v_i, v_j)$ (for fixed $g$) as
$\sum_{1\leq r\leq n} c_r^{ij} v_r$
for $c^{ij}_r=c^{ijg}_r\in \k$, obtaining
\begin{equation*}
\begin{aligned}
0&=
\sum_{\circlearrowleft} 
\sum_{1\leq r\leq n}
q_{jk}\ c_r^{ij}\ v_k v_r
-q_{ki}\ \chi_k(g)\ c_r^{ij}\ v_r v_k\, .
\end{aligned}
\end{equation*}
Next, we exchange the order of $v_r$ and $v_k$
so that indices increase, obtaining a sum
over cyclic permutations of $i,j,k$ of
\begin{equation*}
\begin{aligned}
\sum_{k\leq r} &
q_{jk}\  c_r^{ij}\ v_k v_r
-q_{ki}\ \chi_k(g)\ c_r^{ij}\
\big(q_{rk}\ v_k v_r + \kappa(v_r, v_k)\big)\\
&\quad\quad\quad
+ \sum_{k>r}
q_{jk}\ c_r^{ij}\ \big(q_{kr}\ v_r v_k + \kappa(v_k, v_r)\big)
-q_{ki}\ \chi_k(g)\ c_r^{ij}\ v_r v_k\ .
\end{aligned}
\end{equation*}
The terms of degree 2 vanish exactly when
\begin{equation}
  \label{deg2}
\begin{aligned}
0=
\sum_{\circlearrowleft} 
\Big(
\sum_{k\leq r}
q_{rk}\, \big(q_{jk}\, q_{kr}  
-q_{ki}\, \chi_k(g)
\big)\, c_r^{ij}\ v_k v_r  
+
\sum_{k>r}
\big(q_{jk} \,
q_{kr} -q_{ki}\, \chi_k(g)\big)\, c_r^{ij}\ v_r v_k\Big) \ .
\end{aligned}
\end{equation}
We combine like terms and see
that~\eqref{deg2} holds exactly when six equations hold,
obtained by taking cyclic permutations of indices on equations
\begin{gather}
\big(q_{jk}-q_{ki}\, q_{rk}\, \chi_k(g)\big)\, c_r^{ij}=0 
\qquad \text{for } r \neq i,j \label{Con21}\, , \\
\big(1-q_{ij}\, \chi_i(g)\big)\, c_k^{jk}+\big(q_{jk}-\chi_k(g)\big)\,c_i^{ij}=0 \label{Con22}\, ,
\end{gather}
whence Condition~(2) follows.

We collect terms of degree $1$; these vanish exactly when
\begin{equation*}\label{deg1terms}
\begin{aligned}
  0& =\sum_{\circlearrowleft} \Big( 
  \sum_{k\leq r}  -q_{ki}\ \chi_k(g)\ c_r^{ij} 
\ \kappa(v_r, v_k)
+ \sum_{k>r} q_{jk}\ c_r^{ij}\ \kappa(v_k, v_r)\Big)
\\
&=
\sum_{\circlearrowleft} \Big(
\sum_{k\leq r} q_{ki}\ q_{rk}\ \chi_k(g)\ 
c_r^{ij}\  \kappa(v_k, v_r)
+ \sum_{k>r} q_{jk}\ c_r^{ij}\ \kappa(v_k, v_r)\Big).
\end{aligned}
\end{equation*}
We add and subtract the second sum
over $k>r$ but taken over
$k\leq r$ instead to obtain
\begin{equation*}\label{onehand}
  \begin{aligned}
    0&=\sum_{\circlearrowleft} 
  \sum_{k\leq r} \big(q_{ki} \, q_{rk}\, \chi_k(g)-q_{jk} \big)\ c_r^{ij}  
  \ \kappa(v_k, v_r)
  +   \sum_{\circlearrowleft}
  \sum_{r} q_{jk}\ c_r^{ij}\ \kappa(v_k, v_r)\, .
  \end{aligned}
\end{equation*}
The second sum is just
$\sum_{\circlearrowleft}\,
q_{jk}\, \kappa\big(v_k,\kappa_g(v_i,v_j)\big)$.
Equation~\eqref{Con21} implies that terms
in the first sum with $r\neq i,j$ vanish;
we rewrite the remaining terms using
Equation~\eqref{Con22} and the fact that $\kappa$
is quantum antisymmetric and
obtain
$\sum_{\circlearrowleft}
\big(\chi_{k}(g)-q_{jk}\big) c_i^{ij}$.
Condition~(3) follows.
\end{proof}

\vspace{3ex}

\begin{rem}
The conditions in Theorem~\ref{PBWconditions}
  can be written alternatively as 
\begin{itemize}
\item[(1)]
the function $\kappa$ is $G$-invariant,
\item[(2)]
$
0=\sum_{\circlearrowleft} 
 q_{jk}\, v_k\, \kappa_g(v_i, v_j) 
-q_{ki}\, \chi_k(g)\, \kappa_g(v_i, v_j)\, v_k
\text{ in }\ S_{\bf q}(V)\, ,\ \text{ and }
$
\rule[0ex]{0ex}{2.5ex}
\item[(3)]
\rule[0ex]{0ex}{2.5ex}
$0 = \sum_{\circlearrowleft}
q_{ki}\ \chi_k(g)\ \kappa(\kappa(v_i,v_j), v_k)
-
q_{jk}\ \kappa(v_k,\kappa(v_i,v_j))
$\quad in $V\ot \kG \, $,
\end{itemize}
for all $g$ in $G$ and
all distinct $i,j,k$.
\end{rem}

\vspace{3ex}

We will see in the proof of Theorem~\ref{thm:main1} that
the color Lie Jacobi identity is equivalent to the
condition $0=\sum_{\circlearrowleft}
q_{jk}\ \kappa\big(v_k,\kappa_g(v_i,v_j)\big)$
for distinct $i,j,k$.
Hence, we are interested in a condition
that recovers this expression
from Theorem~\ref{PBWconditions}(3), i.e.,
a condition that forces half of that third PBW condition
to vanish.
This condition, defined next, is a precursor to a stronger condition that
we will need later to guarantee a Hopf structure on the quantum
Drinfeld orbifold algebra $\Hqk$. 

\vspace{2ex}

\begin{definition}\label{vanishingcondition}
  \em{A
    quotient algebra $\Hqk$ as in~(\ref{eqn:Hqk}) satisfies
    the {\em vanishing condition} if for all $g$ in $G$,
    distinct $1\leq i,j,k \leq n$,
  and $1\leq r \leq n$,
$$ \,^gv_k = q_{ik}\, q_{jk}\, q_{kr}\ v_k$$
  whenever the coefficient of $v_r\ g$ is nonzero
  in $\kappa(v_i, v_j)\in V\ot \kG$.}
\end{definition}

\vspace{2ex}

This vanishing condition indeed implies a simplified PBW theorem
as a corollary of Theorem~\ref{PBWconditions}:

\begin{cor}\label{PBWconditions-funnycondition}
Let $G$ be an abelian group acting diagonally 
on $V$ with respect to a $\k$-basis $v_1, \ldots, v_n$.
Let $\kappa: V\times V\rightarrow V\ot \k G$ be a parameter function
defining the algebra $\Hqk$ as in~(\ref{eqn:Hqk}) and 
satisfying vanishing condition~(\ref{vanishingcondition}). Then 
$\mathcal{H}_{\bf q, \kappa}$ satisfies the PBW property if
and only if
\begin{itemize}
\item
  $\kappa$ is $G$-invariant,
  and
\item
\rule[0ex]{0ex}{2.5ex}
$0=\sum_{\circlearrowleft} 
q_{jk}\ \kappa\big(v_k,\kappa(v_i,v_j)\big)
\quad\text{ for distinct }\ \
1\leq i,j,k \leq n \, $.
\end{itemize}
\end{cor}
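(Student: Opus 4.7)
The plan is to derive this corollary directly from Theorem~\ref{PBWconditions} by showing that the vanishing condition~(\ref{vanishingcondition}) automatically eliminates its Condition~(2) and the first cyclic summation in its Condition~(3), leaving precisely the $G$-invariance of Condition~(1) together with the Jacobi-like identity stated in the corollary.

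First, I would rephrase the vanishing condition in scalar form via ${}^g v_k = \chi_k(g)\,v_k$: for distinct $i, j, k$ and any $r$, the equality $\chi_k(g) = q_{ik}\,q_{jk}\,q_{kr}$ holds on the support of $c_r^{ijg}$. Applied to the first equation of Theorem~\ref{PBWconditions}(2), the coefficient $\chi_k(g) - q_{jk}q_{ik}q_{kr}$ vanishes precisely on the support of $c_r^{ijg}$, so that equation is automatic. For the second equation of Theorem~\ref{PBWconditions}(2) I would verify the two terms individually: applying the rephrased condition to $\kappa(v_j, v_k)$ with third index $i$ and $r = k$ yields $\chi_i(g) = q_{ji}\,q_{ki}\,q_{ik} = q_{ij}^{-1}$ (using $q_{ki}q_{ik} = 1$), so $1 - q_{ij}\chi_i(g) = 0$ on the support of $c_k^{jkg}$; and applying it to $\kappa(v_i, v_j)$ with third index $k$ and $r = i$ yields $\chi_k(g) = q_{ik}\,q_{jk}\,q_{ki} = q_{jk}$, so $q_{jk} - \chi_k(g) = 0$ on the support of $c_i^{ijg}$.

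The same identity $\chi_k(g) = q_{jk}$ on the support of $c_i^{ijg}$ annihilates each summand in the first cyclic sum of Theorem~\ref{PBWconditions}(3), reducing that condition to $0 = \sum_{\circlearrowleft} q_{jk}\,\kappa(v_k, \kappa_g(v_i, v_j))$ for each $g \in G$ and distinct $i, j, k$. Using the extension $\kappa(v_k, w\,g) = \kappa(v_k, w)\,g$ and the decomposition $\kappa(v_i, v_j) = \sum_g \kappa_g(v_i, v_j)\,g$, these per-$g$ identities reassemble into the single equation $0 = \sum_{\circlearrowleft} q_{jk}\,\kappa(v_k, \kappa(v_i, v_j))$ in $V \otimes \kG$ of the corollary. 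The main subtlety is index bookkeeping---choosing which index in the vanishing condition plays the role of ``first two arguments of $\kappa$'' versus ``the $v_k$ being acted upon''---and the reduction becomes transparent once one observes that the specializations $r = i$ and $r = k$ yield precisely the character identities needed to cancel the extraneous coefficients in Theorem~\ref{PBWconditions}.
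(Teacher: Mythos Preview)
Your argument is correct and is precisely the elaboration the paper has in mind: the corollary is stated without proof as an immediate consequence of Theorem~\ref{PBWconditions}, and your verification that the vanishing condition annihilates both lines of Condition~(2) and the first cyclic sum in Condition~(3) via the specializations $r=i$ and $r=k$ is exactly the intended reduction. Your handling of the index bookkeeping---identifying which pair plays the role of the arguments of $\kappa$ and which plays the role of the acted-upon vector---matches the paper's derivation in the proof of Theorem~\ref{PBWconditions}.
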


\vspace{2ex}

Groups acting on $V$ without fixed points
will provide us with a wide class of examples where
the vanishing condition holds and thus a color Jacobi identity holds. 
To establish this connection,
we first need a lemma.

\begin{lemma}\label{Fixed point}
Let $G$ and $\Hqk$ be as in Theorem~\ref{PBWconditions}.
\begin{enumerate}[(i)]
\item If $\kappa$ is $G$-invariant, then
$\chi_i\chi_j=\chi_r$ for $1\leq i,j,r \leq n$
whenever
$v_r\, g$ has a nonzero coefficient in $\kappa(v_i,v_j)$.
\item If the action of $G$ is fixed point free,
  then Condition~(2) in Theorem~\ref{PBWconditions} is equivalent
  to the vanishing condition~(\ref{vanishingcondition}).
\end{enumerate}
\end{lemma}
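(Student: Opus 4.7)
For part~(i), the plan is a direct computation from $G$-invariance. Expand $\kappa(v_i,v_j) = \sum_{r,g} c_r^{ijg}\, v_r \otimes g$ and apply the adjoint action of an arbitrary $h \in G$ to both sides of the $G$-invariance identity $\kappa({}^h v_i,{}^h v_j) = {}^h\kappa(v_i,v_j)$. Because $G$ is abelian, conjugation is trivial on the $\kG$-factor, so ${}^h\kappa(v_i,v_j) = \sum_{r,g} \chi_r(h)\, c_r^{ijg}\, v_r \otimes g$. On the other side, bilinearity and the diagonal action yield $\kappa({}^h v_i,{}^h v_j) = \chi_i(h)\chi_j(h)\, \kappa(v_i,v_j)$. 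Comparing coefficients of the linearly independent basis elements $v_r \otimes g$ gives $\chi_r(h) = \chi_i(h)\chi_j(h)$ for every $h \in G$ whenever $c_r^{ijg} \neq 0$, i.e., $\chi_r = \chi_i\chi_j$ as characters.

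For part~(ii), I would first recast the vanishing condition as the single family of scalar identities $(\chi_k(g) - q_{ik}q_{jk}q_{kr})\, c_r^{ijg} = 0$, ranging over $g \in G$, distinct $i,j,k$, and all $r \in \{1,\dots,n\}$. The subfamily with $r \neq i,j$ coincides with the first equation of Condition~(2); using $q_{ki}q_{ik} = q_{kj}q_{jk} = 1$, the extra $r = i$ and $r = j$ cases simplify to $(\chi_k(g) - q_{jk})\, c_i^{ijg} = 0$ and $(\chi_k(g) - q_{ik})\, c_j^{ijg} = 0$. For vanishing $\Rightarrow$ Condition~(2), only the second equation of Condition~(2) needs attention and is handled by case analysis: if $c_i^{ijg} \neq 0$, the $r = i$ case of vanishing forces $\chi_k(g) = q_{jk}$, zeroing the second summand; if $c_k^{jkg} \neq 0$, applying vanishing to $\kappa(v_j,v_k)$ at $r = k$ with auxiliary index $l = i$ forces $\chi_i(g) = q_{ji}$, so $q_{ij}\chi_i(g) = 1$ zeros the first summand. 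For Condition~(2) $\Rightarrow$ vanishing, I invoke part~(i) (available because $\kappa$ is $G$-invariant---Condition~(1) of Theorem~\ref{PBWconditions}---in the PBW setting): nonvanishing of $c_i^{ijg}$ forces $\chi_i = \chi_i\chi_j$, hence $\chi_j$ trivial, contradicting fixed-point-freeness of the action. Thus $c_i^{ijg} = c_j^{ijg} = c_k^{jkg} = 0$ identically, making all the extra vanishing constraints at $r = i,j$ and both summands of the second equation of Condition~(2) trivially satisfied; both conditions then collapse to the first equation of Condition~(2).

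The main obstacle is matching the asymmetric shapes of the two conditions: vanishing carries extra constraints at $r = i,j$, while Condition~(2) carries an extra second equation involving the diagonal coefficients $c_i^{ijg}$ and $c_k^{jkg}$, so at first glance each side encodes data the other lacks. The resolution is that $G$-invariance of $\kappa$, via part~(i), combined with fixed-point-freeness of the $G$-action, forces all the diagonal coefficients $c_i^{ijg}$, $c_j^{ijg}$, $c_k^{jkg}$ to vanish identically, rendering the apparent surplus constraints vacuous on both sides and reducing the equivalence to a common core.
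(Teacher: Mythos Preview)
Your proof is correct and matches the paper's approach: part~(i) is argued identically, and for part~(ii) both you and the paper invoke part~(i) together with fixed-point-freeness to force the diagonal coefficients $c_i^{ijg}$, $c_j^{ijg}$ (and hence $c_k^{jkg}$) to vanish, collapsing both Condition~(2) and the vanishing condition to the common first equation of Condition~(2). Your direct case-analysis establishing vanishing $\Rightarrow$ Condition~(2) without fixed-point-freeness is a small bonus beyond what the paper writes out here (it is exactly what underlies Corollary~\ref{PBWconditions-funnycondition}), and you correctly flag the tacit reliance on $G$-invariance of $\kappa$ for the converse direction, which the paper's proof also uses without explicit mention.
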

\begin{proof}
If $\kappa$ is $G$-invariant, then for all $g,h$ in $G$,
$$
\begin{aligned}
\sum_r \chi_r(h)\, c_r^{ijg}\, v_r
& \ =\ {}^h\big(\sum_r c_r^{ijg}\, v_r) 
\ =\ {}^h\big(\kappa_g(v_i, v_j)\big)\\
& \ =\ \kappa_g({}^h v_i, {}^h v_j)
\ =\  \chi_i(h)\, \chi_j(h)\ \kappa_g(v_i, v_j)
\ =\ \sum_r
\chi_i(h)\chi_j(h)\, c_r^{ijg} \, v_r\, .
\end{aligned}
$$
This implies that
$\chi_i(h)\chi_j(h)=\chi_r(h)$ for all $h$ in $G$
whenever some $c_r^{ijg}\neq 0$.
In particular, in the case $r=j$,
we have $\chi_i(h)=1$ for all $h \in G$ whenever $c_r^{ijg}\neq 0$,
i.e., $G$ fixes $v_i$.
Thus, if the action of $G$ is fixed point free,
then $c_i^{ijg} =0=c_j^{ijg}$ for all distinct $i,j$ and all $g$ in $G$,
and the second part of Condition~(2) is trivial.
\end{proof}

Corollary~\ref{PBWconditions-funnycondition} and
  Lemma~\ref{Fixed point} together imply the following
  PBW result.

\begin{cor}\label{PBWconditions'}
The three PBW conditions of
Theorem~\ref{PBWconditions} 
are implied by conditions
\begin{itemize}
\item[(1$'$)]
the function $\kappa$ is $G$-invariant,
\item[(2$'$)] vanishing condition~(\ref{vanishingcondition}) holds, and
\rule[-1.5ex]{0ex}{4ex}
\item[(3$'$)]
  \rule[-1.5ex]{0ex}{3ex}
$0=\sum_{\circlearrowleft}
q_{jk}\ \kappa\big(v_k,\kappa(v_i,v_j)\big)$
  for all distinct $1\leq i,j,k \leq n$.
\end{itemize}
In addition, if the action of $G$ on $V$ is fixed point free,
then the three conditions in 
Theorem~\ref{PBWconditions} may be replaced by the three
above conditions.
\end{cor}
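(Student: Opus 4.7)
The plan is to derive the corollary directly from Corollary~\ref{PBWconditions-funnycondition} together with Lemma~\ref{Fixed point}(ii), as flagged in the sentence preceding the statement. Since $(2')$ is the vanishing condition~(\ref{vanishingcondition}) by definition and $(1')$ coincides with~(1), the first direction reduces to checking that $(1')+(2')+(3')$ forces conditions~(2) and~(3) of Theorem~\ref{PBWconditions}. The cleanest route runs through the PBW property: Corollary~\ref{PBWconditions-funnycondition} asserts that, under the vanishing condition, $\Hqk$ has the PBW property if and only if $\kappa$ is $G$-invariant and the bracket identity $(3')$ holds. Hence from $(1')+(2')+(3')$ we conclude that $\Hqk$ has the PBW property, and then the forward direction of Theorem~\ref{PBWconditions} returns (1), (2), (3).

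For a more hands-on verification, one can derive condition~(2) directly from $(2')$: applying vanishing with $r=i$ to a nonzero $c_i^{ijg}$ forces $\chi_k(g)=q_{ik}q_{jk}q_{ki}=q_{jk}$, which kills the summand $(q_{jk}-\chi_k(g))\,c_i^{ijg}$; applying vanishing to the pair $(j,k)$ with $r=k$ and third distinct index $i$ forces $\chi_i(g)=q_{ji}=q_{ij}^{-1}$, which kills the summand $(1-q_{ij}\chi_i(g))\,c_k^{jkg}$; and the first half of~(2) is immediate from the statement of vanishing. The same observation annihilates the first cyclic sum in Theorem~\ref{PBWconditions}(3), so~(3) collapses to the bracket identity encoded by $(3')$.

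For the fixed-point-free converse, assume (1), (2), (3) hold. Lemma~\ref{Fixed point}(ii) upgrades~(2) to the vanishing condition, giving~$(2')$, and (1) is identical to~$(1')$. Theorem~\ref{PBWconditions} then yields the PBW property for $\Hqk$, and Corollary~\ref{PBWconditions-funnycondition} (in its ``only if'' direction) extracts~$(1')$ and~$(3')$ from PBW together with vanishing. The one subtlety worth flagging is that Theorem~\ref{PBWconditions}(3) is stated for each $g\in G$ separately while $(3')$ is a single collected equation in $V\ot\kG$; routing through the intrinsic PBW property sidesteps any direct comparison between these two formulations, which is where the main obstacle would otherwise lie.
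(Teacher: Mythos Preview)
Your argument is correct and follows precisely the route the paper indicates: invoke Corollary~\ref{PBWconditions-funnycondition} to pass from $(1')+(2')+(3')$ to the PBW property and then read off (1)--(3) from Theorem~\ref{PBWconditions}, and for the fixed-point-free converse use Lemma~\ref{Fixed point}(ii) to recover $(2')$ from~(2) before running Corollary~\ref{PBWconditions-funnycondition} in the other direction. Your added hands-on check of~(2) is a useful supplement, and you are right to flag that a direct comparison of the $g$-by-$g$ form of~(3) with the collected identity~$(3')$ is not immediate---routing through the PBW property is exactly how this is meant to be handled.
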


\vspace{3ex}

There are many examples in the literature of quotient algebras defined 
by~(\ref{eqn:Hqk})
but with $\kappa$ taking values in $\k G$ instead of
in $V\ot \k G$, particularly 
in the case where $q_{ij}=1$ for all $i,j$. 
This includes 
the noncommutative deformations of Kleinian singularities \cite{CBH}
in which $n=2$ and $G < {\rm {SL}}_2(\k)$. 
There are also examples for more general quantum systems 
of parameters ${\bf q}$, however for those
in the literature, again $\kappa$ typically takes values in $\kG$. 
We give here some examples of a different nature,  focusing in this paper 
on the less studied case where $\kappa$ takes values
in $V\ot \kG$.
For all these examples,
it will follow from results in the next section
that $L=V\ot \kG$ is an ungraded Yetter-Drinfeld color Lie ring.
One may check that they all satisfy the vanishing condition~(\ref{vanishingcondition}).
Examples~\ref{ex:2} and~\ref{ex:3} in fact satisfy a stronger
condition, the strong vanishing condition~(\ref{strongvanishingcondition}) that will
be defined in Section~\ref{gradedcolorLierings}.

\vspace{1ex}

\begin{ex}\label{ex:1}
  Let $V=\k^3$ with basis $v_1,v_2,v_3$.
  Let $q$ in $\k$ be a primitive $m$-th root of unity
  for some fixed $m\geq 2$.
  Let $G\cong \Z/m\Z\times \Z/m\Z$
  be generated by diagonal matrices
$
g_1=\text{diag}(q,1,q)$ and 
$g_2=\text{diag}(1,q,q)$ acting on $V$.
The action of $G$ on $V$ induces 
an action on $S_{\bf q}(V)$ where $q_{12}=q_{13}=q$ and
$q_{23}=1$. Then the algebra
\[
    \mathcal{H}=(T(V)\rtimes G)/ (v_1v_2 - q v_2v_1 - qv_3\, g_1 , \ 
   v_1v_3 - qv_3v_1, \ v_2v_3 - v_3v_2 )
\]
satisfies the PBW property by 
Theorem~\ref{PBWconditions}.
Corollary~\ref{cor:main1fixedpointfree}
will imply that
$\mathcal{H}$ is the universal enveloping algebra
$\mathcal{U}(L)$ of 
an ungraded color Lie ring  
$L=V\ot \kG$ with 
\[
   [v_1,v_2] = q\, v_3\, g_1, \ \ \ [v_1,v_3]=0, \ \ \ [v_2,v_3]=0 .
\]
See~\cite[Proposition~3.5, Remark~3.6, and Remark~3.11]{Shakalli},
where this example appears.
\end{ex}

\vspace{1ex}

\begin{ex}\label{ex:2}
Let $V=\k^3$ with basis $v_1,v_2,v_3$.
Let $G=\Z/2\Z$ act on $V$ by the diagonal matrix $g=\text{diag}(-1,-1,1)$.
Fix some nonzero $q$ in $\k$. 
The action of $G$ on $V$ induces an action on $S_{{\bf q}}(V)$ for $q_{12}=q^{-1}$,
$q_{13}=-q^{-1}$, and $q_{23}=-q$. 
Fix some $\lambda$ in $\k$.
Then
\[
\mathcal{H}_\lambda=(T(V)\rtimes G) / (v_1v_2-q^{-1}v_2v_1-\lambda v_3\, g , \
   v_1v_3+q^{-1}v_3v_1, \ v_2v_3 + q v_3v_2 )
\]
satisfies the PBW property by
Theorem~\ref{PBWconditions}. 
Theorem~\ref{thm:main1} will imply that
$\mathcal{H}_\lambda \cong \mathcal{U}(L_\lambda)$
for
a 1-parameter family of graded color Lie rings $L_\lambda=V\ot\kG$ with
brackets
\[
  [v_1,v_2]=\lambda v_3\, g , \ \ \ [v_1,v_3]= 0 , \ \ \ [v_2,v_3]=0 .
\]
\end{ex}

\vspace{1ex}

\begin{ex}\label{ex:3}
  Let $V=\k^3$ with basis $v_1,v_2,v_3$.
  Let $q$ in $\k$ be a primitive $m$-th root of unity for some
  fixed $m\geq 2$ and let $G=\Z/m\Z$
  act on $V$ by generator $g=\diag(q,1,1)$.
  Let $\lambda,p$ in $\k$ be nonzero. 
The action of $G$ on $V$ induces an action on $S_{{\bf q}}(V)$ where
$q_{12}=q^{-1}$, $q_{13}=p$, and $q_{23}=1$.
Then
\[
\mathcal{H}_{\lambda,p}
=( T(V)\rtimes G)/ (v_1v_2-q^{-1}v_2v_1-\lambda v_1\, g , \
   v_1v_3-pv_3v_1, \ v_2v_3-v_3v_2 )
\]
satisfies the PBW property by Theorem~\ref{PBWconditions}.
Theorem~\ref{thm:main1} will imply that
$\mathcal{H}_{\lambda,p} \cong \mathcal{U}(L_{\lambda,p})$
for
a $2$-parameter family of graded
color Lie rings $L_{\lambda,p} = V\ot \kG$ with
\[
   [v_1,v_2]= \lambda\, v_1\, g , \ \ \ [v_1,v_3]=0 , \ \ \ [v_2,v_3]=0 .
\]
\end{ex}

\vspace{1ex}

\begin{ex}\label{ex:4}
  Fix some $n\geq 1$.
Let $V=\k^{2n}$ with basis $v_1,\ldots, v_{2n}$.
Consider $G=(\Z/2\Z)^{n}$ acting on $V$ by generators $g_1,\ldots, g_{n}$
with ${}^{g_i} v_j = (-1)^{\delta_{i,j}} v_j$
for $1\leq i\leq n$, $1\leq j\leq 2n$.
Set $q_{ij} = (-1)^{\delta_{j-i,n}}$ for $1\leq i<j\leq 2n$.
The action of $G$ on $V$ induces an action on $S_{{\bf q}}(V)$.
Let $\lambda_i$ be a scalar in $\k$ ($1\leq i\leq n$).
Then
\[
\begin{aligned}
  \mathcal{H}_{\boldsymbol{\lambda}}
  =(T(V)\rtimes G)/(&v_iv_{n+i} + v_{n+i}v_i -\lambda_i v_i\, g_i\quad \mbox{ for } 
   1\leq i\leq n,\\
  & v_i v_j -  v_j v_i \quad \mbox{ for } 1\leq i<j\leq 2n \mbox{ with } j\neq n+i )
\end{aligned}
\]
satisfies the PBW property (by 
Theorem~\ref{PBWconditions})
for $\boldsymbol{\lambda}=(\lambda_1, \ldots, \lambda_n)$,
$L_{\boldsymbol{\lambda}} = V\ot \kG$ is an ungraded color Lie ring 
with
$$
\begin{aligned}
  {[}v_i, v_{n+i}{]} & = & \lambda_i\, v_i\, g_i \
  &\quad\mbox{ for } 1\leq i\leq n , & & \\
  {[}v_i,v_j{]} & = & 0 \
  &\quad\mbox{ for } 1\leq i<j\leq 2n \mbox{ with } j\neq n+i , & & 
\end{aligned}
$$
and Theorem~\ref{thm:main1} will imply that $\mathcal{H}_{\boldsymbol{\lambda}}
\cong \mathcal{U}(L_{\boldsymbol{\lambda}})$.
\end{ex}

\section{Drinfeld orbifold algebras
  defining ungraded color Lie rings}
\label{FirstMainThm}
We now highlight the distinction between the graded
and ungraded color Lie rings.
We show in this section
that every quantum Drinfeld orbifold algebra
with $G$ acting fixed point free on $V$
is the universal enveloping algebra of
some ungraded Yetter-Drinfeld color Lie ring.

As before, let $G$ be a finite abelian group acting diagonally
on $V=\k^n$ with basis $v_1,\ldots, v_n$
and use linear characters
$   \chi_i: G\rightarrow \k^* $ 
to record the $i$-th diagonal entries of each $g$ in $G$
by setting $  {}^g v_i = \chi_i(g) v_i $ for $1\leq i\leq n$.

\subsection*{Natural grading used to construct  Yetter-Drinfeld color Lie rings}
\label{naturalgrading}
We work with a choice of grading group $A$ throughout this section: 
Set $A=\ZZ^n\times G$.
The space 
$$L=V\ot \kG = \bigoplus_{i,\, g} \k (v_i\ot g)$$
is naturally graded as a vector space
by the abelian group $A$ after
setting
\begin{equation*}
|v_i|=(a_i , 1_G),
\quad
|g|=(0,g),
\quad\text{and}\quad
|v_i \ot g| = |v_i| |g|
\quad\text{ for all } g\in G
\end{equation*}
where
$a_1, \ldots, a_n$
is the standard basis of  $\ZZ^n$.
We will use this specific grading
throughout this section
to relate quantum Drinfeld orbifold algebras
to color Lie rings.

\subsection*{Quantum Drinfeld orbifold algebras
  as universal enveloping algebras}
We show that quantum Drinfeld orbifold algebras
 define ungraded color Lie rings
 when the underlying action of $G$ is fixed point free.
 We begin with a more general theorem that merely requires the
 vanishing condition~(\ref{vanishingcondition}) on quotient algebras $\Hqk$
defined as in
 ~(\ref{eqn:Hqk}). 
 
 \begin{thm}
   \label{thm:main1}
  Let $G$ be a finite abelian group acting diagonally
on $V=\k^n$ with respect to a basis $v_1,\ldots,v_n$.
Suppose that $\Hqk$ 
is 
a quantum Drinfeld orbifold algebra
for some parameter function $\kappa: V\times V\rightarrow V\ot \k G$
satisfying vanishing condition~(\ref{vanishingcondition}).
Let $A = \ZZ^n\times G$ and 
consider the $A$-graded vector space $L=V\ot \kG$ as above. Then
\begin{itemize}
\item[(a)]
  There is an
  antisymmetric bicharacter 
  $\epsilon: A\times A \rightarrow \k^*$ with
\begin{equation*}
  \hphantom{xxx}
  \epsilon\big(|v_i \ot g|, |v_j \ot h|\big) 
= q_{ij}\ \chi_i^{-1}(h)\ \chi_j(g)\ 
\ \ \text{for }
g,h\in G,\ 1\leq i,j\leq n\, .
\end{equation*}
\item[(b)]
The space  $L$ is an ungraded
Yetter-Drinfeld  $(A,\epsilon)$-color Lie ring
for a color Lie bracket $[\ ,\ ]: L \times L \rightarrow L$
defined by
\begin{equation*}
   [v_i\ot g,v_j\ot h] = \kappa(v_i, \, ^g v_j)\, g h
   \quad\text{ for }
g,h\in G\, .
\end{equation*}
\item[(c)]
 The algebra
$\cH_{{\bf q},\kappa}$ is isomorphic to the universal enveloping algebra of $L$.
\end{itemize}
\end{thm}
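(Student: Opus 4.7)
The plan is to verify (a), (b), (c) in turn, leveraging the PBW hypothesis and the vanishing condition. For (a), I define $\epsilon$ on generators of the abelian group $A = \ZZ^n \times G$ by
\[
  \epsilon(a_i, a_j) = q_{ij}, \quad \epsilon(a_i, h) = \chi_i^{-1}(h), \quad \epsilon(g, a_j) = \chi_j(g), \quad \epsilon(g, h) = 1
\]
for $1 \leq i, j \leq n$ and $g, h \in G$, where $a_1, \ldots, a_n$ denote the standard basis of $\ZZ^n$. Since each $\chi_i$ is a group homomorphism and $\ZZ^n$ is free abelian, this extends uniquely to a bicharacter on $A$. Antisymmetry follows from $q_{ji} = q_{ij}^{-1}$ and $\chi_i^{-1}(h)\chi_i(h) = 1$, and the formula for $\epsilon(|v_i \ot g|, |v_j \ot h|)$ stated in (a) follows by multiplicativity.

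For (b), $\k$-bilinearity and $\kG$-balance of the bracket are immediate from the defining formula, which matches the extension of $\kappa$ to $(V \ot \kG)\times (V \ot \kG)$ used in Theorem~\ref{PBWconditions}. The Yetter-Drinfeld condition reduces on the basis to ${}^g v_i = \chi_i(g)\, v_i = \epsilon(|g|, |v_i|)\, v_i$, which holds by the diagonality of the $G$-action. For $\epsilon$-antisymmetry I would expand
\[
  [v_i \ot g,\, v_j \ot h] = \chi_j(g)\,\kappa(v_i, v_j)\, gh \qquad \text{and} \qquad [v_j \ot h,\, v_i \ot g] = \chi_i(h)\,\kappa(v_j, v_i)\, hg,
\]
use commutativity of $G$ and the quantum antisymmetry $\kappa(v_j, v_i) = -q_{ij}^{-1}\kappa(v_i, v_j)$, and match against $\epsilon(|v_i \ot g|, |v_j \ot h|) = q_{ij}\chi_i^{-1}(h)\chi_j(g)$.

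The main obstacle is the $\epsilon$-Jacobi identity. By bilinearity it suffices to verify it on basis triples $(v_i \ot g,\, v_j \ot h,\, v_k \ot \ell)$. Expanding each nested bracket pulls the group elements to the right and extracts scalar $\chi$-factors from the diagonal action. If two of $i, j, k$ coincide, the cyclic sum collapses: the contribution $[v_i, v_i] = \kappa(v_i, v_i)$ vanishes (by quantum antisymmetry, since $q_{ii}=1$) and the two remaining terms cancel by $\epsilon$-antisymmetry. When $i, j, k$ are distinct, the direct computation of $[v_i \ot g,\, [v_j \ot h,\, v_k \ot \ell]]$ yields $\sum_{g'} \kappa(v_i, \kappa_{g'}(v_j, v_k))\, gg'h\ell$ up to explicit $\chi$-factors. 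After summing cyclically and collecting by group element, the resulting identity reduces, component-by-component in $\kG$, to $\sum_{\circlearrowleft} q_{jk}\, \kappa(v_k, \kappa_g(v_i, v_j)) = 0$, which holds by Corollary~\ref{PBWconditions-funnycondition} since $\Hqk$ has the PBW property and satisfies the vanishing condition.

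For (c), I would invoke the identification $T_{\kG}(L) \cong T(V) \rtimes G$, which holds because $L = V \ot \kG$ is free as a left $\kG$-module on $V$, so the $\kG$-tensor algebra of $L$ coincides with the skew group algebra on the $\k$-tensor algebra $T(V)$. Under this identification, the defining relation $(v_i \ot 1) \ot_{\kG} (v_j \ot 1) - \epsilon(|v_i|, |v_j|)\, (v_j \ot 1) \ot_{\kG} (v_i \ot 1) - [v_i \ot 1,\, v_j \ot 1]$ of the ideal $J$ becomes $v_i v_j - q_{ij} v_j v_i - \kappa(v_i, v_j)$, matching the defining relations of $\Hqk$. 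A short argument using $\kG$-bilinearity reduces the remaining homogeneous generators of $J$ (involving arbitrary group elements) to these, yielding the algebra isomorphism $\U(L) \cong \Hqk$.
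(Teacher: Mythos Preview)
Your proposal is correct and follows essentially the same route as the paper: define $\epsilon$ on generators of $A=\ZZ^n\times G$ and extend multiplicatively, verify $\epsilon$-antisymmetry directly, reduce the $\epsilon$-Jacobi identity to the condition $\sum_{\circlearrowleft} q_{jk}\,\kappa(v_k,\kappa(v_i,v_j))=0$ of Corollary~\ref{PBWconditions-funnycondition}, and then identify $T_{\kG}(L)$ with $T(V)\rtimes G$ to match the ideal $J$ with the defining ideal of $\Hqk$. One point deserves to be made explicit: the color Lie bracket must be $\kG$-\emph{bilinear} (Definition~\ref{def:colorLie}), not merely $\k$-bilinear and $\kG$-balanced, and left $\kG$-linearity in the first slot---i.e., $[a(v_i\ot g),\,v_j\ot h]=a\,[v_i\ot g,\,v_j\ot h]$---is not immediate from the formula but requires the $G$-invariance of $\kappa$ supplied by Corollary~\ref{PBWconditions-funnycondition}; the paper checks this directly, and the same $G$-invariance is what allows you to push ${}^{g}(\,\cdot\,)$ through $\kappa$ when factoring the $\chi$-scalars out of the Jacobi sum.
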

\begin{proof}
  We use the natural $\kG$-bimodule structure
  on  $L = V \otimes \kG$ given by Equation~(\ref{eqn:YD2}).
The function $\epsilon$ given in the statement of the proposition extends to a
bicharacter on $A$ by setting its values on $A\times A$ to be
those determined by the bicharacter condition
and its values on generators. 
Note that this forces
$$\epsilon(|g|,|v_j|)
=\epsilon(|v_i\ot g|\, |v_i\ot 1_G|^{-1},|v_j|)
=\epsilon(|v_i\ot g|,|v_j|)
\ \epsilon\big(|v_i\ot 1_G|,|v_j|\big)^{-1}
=\chi_j(g)$$
and
$\epsilon(|g|,|h|) = 1$
for all $g,h\in G$ and $1\leq i,j\leq n$.
Since $q_{ji}=q_{ij}^{-1}$, the function
$\epsilon$ is antisymmetric.
Then $ ^gv_i=\chi_i(g) v_i = \epsilon(|g|, |v_i|)\, v_i$
and $L$ satisfies the conditions to be Yetter-Drinfeld.
Thus we need only show that $L$ is an ungraded $(A,\epsilon)$-color Lie ring.

\subsubsection*{Bilinear and balanced bracket.}
The bracket $[\ , \ ]$ is $\k G$-bilinear by construction.
We argue that it is also $\kG$-bilinear
with respect to the bimodule structure given by Equation~(\ref{eqn:YD2}).
We use the $G$-invariance of $\kappa$, guaranteed
by Corollary~\ref{PBWconditions-funnycondition},
under the adjoint action of $G$ given in Equation~(\ref{adjointaction}):
For all $a,b,g,h$ in $G$ and $1\leq i, j\leq n$,
$$
\begin{aligned}
\ [\, a(v_i\ot g),\, (v_j\ot h)b\, ]
&=& &[\, ^av_i\ot ag,\ v_j\ot hb\, ]&
&=&
&\kappa(\, ^av_i,\, ^{ag}v_j)\ aghb& \\
&=&
&^a\big(\kappa(v_i,\, ^{g}v_j)\big)\ aghb&
&=&
&a\ \big(\kappa(v_i,\, ^{g}v_j)gh \big)\ b& \\
&=&
&a\ [v_i\ot g,\, v_j\ot h]\ b.&
\end{aligned}
$$

We suppress
tensor symbols on elements of $L$
in the rest of the proof for clarity of notation.

\subsubsection*{Color antisymmetry of bracket}
We check directly that for all $i,j$ and $g,h\in G$, 
$$
\begin{aligned}
\ [v_ig,v_jh]  &\ =\ \kappa( v_i, {}^gv_j)\ gh 
\ =\  \chi_j(g)\ \kappa(v_i,v_j )\ gh
\\
  &\ =\  - q_{ij}\ \chi_j(g)\ \kappa(v_j,v_i)\ gh
   \ =\  - q_{ij}\ \chi_j(g) \chi_i^{-1}(h)\ \kappa (v_jh,v_ig)\\
  & \ =\  - \epsilon( |v_ig| , |v_jh| )\ [v_j h , v_ig] \, . 
\end{aligned}
$$
\subsubsection*{Color Jacobi identity}
For basis vectors $v_i,v_j,v_k$ and $g_i, g_j, g_k$ in $G$, 
$$
\begin{aligned}
\sum_{\circlearrowleft} 
\epsilon( |v_k g_k|,|v_i g_i|) 
&
\big[v_i g_i, [v_j g_j, v_kg_k]\big]\\
&=
\sum_{\circlearrowleft}  
q_{ki} \ \chi_k^{-1}(g_i) \chi_i(g_k)\ 
\kappa\big(v_i, {}^{g_i}\kappa(v_j, {}^{g_j} v_k)\big)
\ g_i g_j g_k \, .
\end{aligned}
$$
By Corollary~\ref{PBWconditions-funnycondition},
$\kappa$ is $G$-invariant, and so this is 
$$
\begin{aligned}
\sum_{\circlearrowleft}  
q_{ki} & \ \chi_k^{-1}(g_i) \chi_i(g_k)\ 
\kappa\big(v_i, \kappa({}^{g_i}v_j, {}^{g_i g_j} v_k)\big)
\ g_i g_j g_k \\
&=
\sum_{\circlearrowleft}  
q_{ki} \ \chi_k^{-1}(g_i) \chi_i(g_k)\ 
\chi_j(g_i) \chi_k(g_i) \chi_k(g_j)
\ \kappa\big(v_i,\kappa(v_j, v_k)\big)\ g_i g_j g_k\\
&=
\chi_i(g_k)\chi_j(g_i) \chi_k(g_j)
\Big(
\sum_{\circlearrowleft}  
q_{ki} \ 
\ \kappa\big(v_i,\kappa(v_j, v_k)\big)\Big)\ g_i g_j g_k
\, ,
\end{aligned}
$$
which vanishes by
Corollary~\ref{PBWconditions-funnycondition} for distinct $i, j, k$.
When $i,j,k$ are not distinct,
then the $\epsilon$-Jacobi identity holds automatically
since $[v,v]=\kappa(v,v)=0$ for all $v$ in $V$ as $\kappa$ is
quantum antisymmetric with $q_{mm}=1$ for all $m$.

\vspace{2ex}

\subsubsection*{Color universal enveloping algebra}
We now check that $\cH_{\bf q,\kappa}$ is isomorphic to $\U(L)$, the 
universal enveloping algebra of $L$. Recall Definition~\ref{def of env alg}: 
$ \ \U(L)=T_{\kG}(V \tensor \kG)/ J $, 
where $J$ is the ideal generated by
\[  \{ v_i g \tensor_{\kG}
v_j h - \epsilon( |v_i g| , |v_jh|) 
( v_j h\tensor_{\kG}  v_i g)-[v_i  g, v_j  h] : 1\leq i,j\leq n, g,h \in G\}. \]
The $\k$-vector space isomorphism
$$
\begin{aligned}
(V \tensor \kG) \tensor_{\kG} (V \tensor \kG)\ &\overset{\cong}{\longrightarrow}
\ V \tensor V \tensor \kG,\\
v_i g \tensor_{\kG} v_j h\ & \mapsto \ v_i \tensor {^g v_j} \tensor gh,
\end{aligned}
$$
induces an isomorphism of $\k$-algebras
$$
\begin{aligned}
  T_{\kG}(V \tensor \kG) \ &\overset{\cong}{\longrightarrow}\ T(V) \rtimes G\, ,\\
  v_i g \tensor_{\kG} v_j h\ &\mapsto\ v_i \ {^g v_j} \ gh.
\end{aligned}
$$
The images of 
generators of the ideal $J$ under this isomorphism
vanish in $\cH_{\bf q, \kappa}$:
\begin{align*}
  v_i g \tensor_{\kG}  v_j h\ -\ &
   \epsilon(|v_ig|,|v_jh|)
( v_j h \ot_{\kG} v_ig) - [v_i g, v_j h]\\
\mapsto\ \ &
v_i \,  {}^g v_j\ gh -\epsilon(|v_ig|,  |v_jh|)\
v_j\,  
{^{h} v_i}\ hg-\kappa(v_i, {^g v_j})\ gh\\
&=
v_i \,  {}^g v_j\ gh
-q_{ij}\ \chi_i^{-1}(h)\, \chi_j(g)\
v_j\,  
{^{h} v_i}\ hg-\kappa(v_i, {^g v_j})\ gh\\
&=
\big(v_i\,  {^g v_j}
-q_{ij}
\ {^g v_j}\,  v_i-\kappa\
         (v_i, {^g v_j})\big)\ gh\\
&= \chi_j(g)\
\big(v_i \,  v_j
         -q_{ij}\  v_j\,  v_i
         -\kappa(v_i,  v_j)\big)\ gh.
\end{align*}
One may check that this isomorphism extends to an algebra isomorphism
$\U(L)\rightarrow \cH_{\bf q, \kappa}$:
$$
\U(L)
= T_{\kG}(V \tensor \kG)/ J
\ \overset{\cong}{\longrightarrow}\ 
T(V) \rtimes G
/I
=\cH_{\bf q, \kappa}
\, ,
$$
where $I$ is the ideal
$(v_i \,  v_j
         -q_{ij}\  v_j\,  v_i
         -\kappa(v_i,  v_j) :1\leq i,j\leq n)$.
Indeed, one may verify that the generators of $I$ correspond
to elements of $J$ under the inverse of the
isomorphism $T_{\kG}(V\ot \kG) \stackrel{\cong}{\longrightarrow}
T(V)\rtimes G$.
\end{proof}

\vspace{2ex}

Together, Theorem~\ref{thm:main1}
and Corollary~\ref{PBWconditions'} give
the following corollary.

\begin{cor}
\label{cor:main1fixedpointfree}
Let $G$ be a finite abelian group acting fixed point free and diagonally 
on $V=\k^n$ with respect to a $\k$-basis $v_1, \ldots, v_n$.
Suppose that $\Hqk$ defined by parameter
$\kappa:V\times V\rightarrow V\ot \kG$ 
is a quantum Drinfeld orbifold algebra. Then
the conclusion of Theorem~\ref{thm:main1} holds.
\end{cor}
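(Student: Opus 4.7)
The plan is to reduce the corollary directly to Theorem~\ref{thm:main1} by verifying that its hypothesis, namely the vanishing condition~(\ref{vanishingcondition}), is automatic once the $G$-action on $V$ is fixed point free. The rest of Theorem~\ref{thm:main1}'s hypotheses (finite abelian $G$, diagonal action, $\Hqk$ being a quantum Drinfeld orbifold algebra) are already assumed, so there is nothing further to check.

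To obtain the vanishing condition, I would invoke Corollary~\ref{PBWconditions'}. Since $\Hqk$ is assumed to be a quantum Drinfeld orbifold algebra, the three PBW conditions of Theorem~\ref{PBWconditions} hold; in particular, $\kappa$ is $G$-invariant (condition (1)) and the second condition holds for all distinct $i,j,k$ and all $g \in G$. The second half of Corollary~\ref{PBWconditions'} asserts precisely that, in the fixed-point-free case, the three conditions of Theorem~\ref{PBWconditions} are equivalent to (1$'$), (2$'$), (3$'$); in particular the vanishing condition~(\ref{vanishingcondition}) is implied.

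The conceptual reason this works, already established in Lemma~\ref{Fixed point}, is that $G$-invariance of $\kappa$ forces $\chi_i\chi_j = \chi_r$ whenever $c_r^{ijg} \neq 0$, and taking $r = j$ or $r = i$ shows that $\chi_i$ or $\chi_j$ must be trivial, which cannot happen under a fixed-point-free action unless $c_i^{ijg} = c_j^{ijg} = 0$. This is exactly what one needs to reduce the second PBW condition to the vanishing condition, and it is the only nontrivial ingredient; the remainder is bookkeeping.

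Once the vanishing condition is established, I simply apply Theorem~\ref{thm:main1}: parts (a), (b), and (c) of that theorem together yield the antisymmetric bicharacter $\epsilon$, the ungraded Yetter-Drinfeld $(A,\epsilon)$-color Lie ring structure on $L = V \otimes \kG$ with bracket $[v_i \otimes g, v_j \otimes h] = \kappa(v_i,{}^g v_j)\, gh$, and the isomorphism $\Hqk \cong \U(L)$. No additional verification is required; the main obstacle, if any, is simply recognizing that the fixed-point-free hypothesis is exactly strong enough to upgrade the raw PBW conditions to the cleaner form that feeds Theorem~\ref{thm:main1}. Thus the proof of the corollary is essentially a one-line citation of Corollary~\ref{PBWconditions'} followed by Theorem~\ref{thm:main1}.
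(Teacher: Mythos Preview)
Your proposal is correct and matches the paper's own argument essentially verbatim: the paper simply states that the corollary follows by combining Theorem~\ref{thm:main1} with Corollary~\ref{PBWconditions'}, which is exactly the route you describe. Your additional remarks invoking Lemma~\ref{Fixed point} merely unpack why Corollary~\ref{PBWconditions'} works in the fixed-point-free case, and are consistent with the paper.
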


\vspace{2ex}

\section{(Graded) Color Lie rings}
\label{gradedcolorLierings}

We now consider the case of (graded) color Lie rings.
We saw in the last section that quantum Drinfeld orbifold
algebras define ungraded color Lie rings when the vanishing 
condition~(\ref{vanishingcondition}) holds.
We show in this section that we obtain
(graded) color Lie rings when the vanishing condition holds
for {\em all} indices $i,j,k$, not just  distinct indices $i,j,k$.

Again, we consider a finite abelian group $G$ acting diagonally
on $V=\k^n$ with basis $v_1,\ldots, v_n$
and linear characters
$   \chi_i: G\rightarrow \k^* $ 
with
$  {}^g v_i = \chi_i(g) v_i 
  $ for $1\leq i\leq n$.

\vspace{2ex}

\begin{definition}\label{strongvanishingcondition}
    \em{We say a     quotient algebra $\Hqk$ as in~(\ref{eqn:Hqk}) satisfies
the {\em strong vanishing condition} if for all $g$ in $G$,
  $1\leq i,j,k \leq n$,
  and $1\leq r \leq n$,
$$ \,^gv_k = q_{ik}\, q_{jk}\, q_{kr}\ v_k$$
  whenever the coefficient of $v_r\ g$ is nonzero
  in $\kappa(v_i, v_j)\in V\ot \kG$.}
\end{definition}

\vspace{2ex}

One may check that Examples~\ref{ex:2} and~\ref{ex:3} satisfy
this strong vanishing condition, while Examples~\ref{ex:1} and~\ref{ex:4} do not. 

We now modify the grading by abelian group $A$
given in the last section to show that quantum Drinfeld orbifold algebras
satisfying the strong vanishing condition define
(graded) color Lie rings.
In Section~\ref{FirstMainThm}, we graded $L=V\ot\kG$
by $A=\ZZ^n\times G$
by setting
\begin{equation*}
|v_i|=(a_i , 1_G),
\quad
|g|=(0,g),
\quad\text{and}\quad
|v_i \ot g| = |v_i| |g|
\quad\text{ for all } g\in G\ .
\end{equation*}
Using this grading, we defined ungraded color Lie rings
from quantum Drinfeld orbifold algebras.
In order to obtain (graded) color Lie rings,
we replace $A$ by its quotient by a normal
subgroup $N$
in order to recapture
grading Condition~(i) in Definition~\ref{def:colorLie} of a color Lie ring.

Consider the dual basis
$\{(v_k\ot g)^*\}$ of $(V\ot \kG)^*=\Hom_{\k}(V\ot \kG, \k)$.
Define a subgroup $N$ of $A$ by a set of generators as follows:
\begin{equation*}
N := \langle\,
 |v_i| |v_j| |v_r|^{-1}|g|^{-1}: (v_r\ot g)^*\kappa(v_i, v_j)\neq 0
    \, \rangle .
\end{equation*}
The condition $(v_r\ot g)^* \kappa(v_i,v_j)\neq 0$
is precisely the 
condition $c^{ijg}_r\neq 0$ in the expansion
$\kappa(v_i,v_j)= \sum_{r,g} c^{ijg}_r v_r g$. 
We then obtain

\begin{thm}\label{DOAsAreColored} 
Fix a finite abelian group $G$ acting diagonally
on $V={\k}^n$. Let  $\cH_{{\bf q},\kappa}$
be a quantum Drinfeld orbifold algebra
defined by a parameter function $\kappa:V\times V\rightarrow V\ot \kG$ 
satisfying the strong vanishing condition~(\ref{strongvanishingcondition}).
Let $A=\ZZ^n\times G$ with subgroup $N$ defined above.
Then 
\begin{itemize}
\item[(a)]
There exists an antisymmetric bicharacter
$\epsilon: A/N\times A/N \rightarrow \k^*$ satisfying
\begin{equation*}
\hphantom{xxxx}   \epsilon(|v_i\ot g|,|v_j \ot h|) 
= q_{ij}\ \chi_j(g)\ \chi_i^{-1}(h)\, \ \ \text{ for }
g,h \text{ in } G, 1\leq i, j\leq n.
\end{equation*}
\item[(b)] 
The space $L=V\ot \kG$ is an $A/N$-graded algebra
under the bilinear operation 
$[\ ,\ ]: L \times L \rightarrow L$
defined by
\begin{equation*}
   [v_i\ot g,v_j\ot h] = \kappa(v_i, \, ^g v_j)\ g h
\quad\text{for}\ g,h\text{ in } G, 1\leq i,j\leq n\, .
\end{equation*}
\item[(c)]
The algebra
$L$ is a (graded) Yetter-Drinfeld $(A/N, \epsilon)$-color Lie ring
with universal enveloping algebra $\U(L)$ isomorphic to $\cH_{{\bf q},\kappa}$.
\end{itemize}
\end{thm}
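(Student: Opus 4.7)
The plan is to mirror the proof of Theorem~\ref{thm:main1}, with the crucial upgrade being that we work modulo the subgroup $N$ in order to obtain a genuine $A/N$-grading on the bracket. The two steps that go beyond Theorem~\ref{thm:main1} are checking that $\epsilon$ descends from $A$ to $A/N$ and that the bracket is $A/N$-graded; the strong vanishing condition~(\ref{strongvanishingcondition}) is precisely what is needed for the first of these. Once this is done, the remaining conditions (color antisymmetry, the Yetter-Drinfeld structure, $\kG$-bilinearity and balancedness, and the isomorphism with $\U(L)$) transfer essentially verbatim from Theorem~\ref{thm:main1}, while the color Jacobi identity becomes automatic by viewing the bracket as a color commutator inside the associative algebra $\cH_{\bf q, \kappa}$.

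For part~(a), I would first define $\epsilon$ on $A \times A$ exactly as in Theorem~\ref{thm:main1}, extending the prescribed formula via the bicharacter relations, and verify antisymmetry using $q_{ji} = q_{ij}^{-1}$. To show that $\epsilon$ factors through $A/N$, it suffices to check that $\epsilon(n, |v_k|) = \epsilon(n, |h|) = 1$ for every generator $n = |v_i||v_j||v_r|^{-1}|g|^{-1}$ of $N$ (i.e., with $c_r^{ijg} \neq 0$) and every $1 \leq k \leq n$ and $h \in G$. A direct expansion yields
\begin{equation*}
\epsilon(n, |v_k|) = q_{ik}\, q_{jk}\, q_{kr}\, \chi_k(g)^{-1} = 1
\end{equation*}
by the strong vanishing condition applied with this particular index $k$ (now allowed to coincide with $i,j$, or $r$), and
\begin{equation*}
\epsilon(n, |h|) = \chi_i^{-1}(h)\, \chi_j^{-1}(h)\, \chi_r(h) = 1
\end{equation*}
by $G$-invariance of $\kappa$ together with Lemma~\ref{Fixed point}(i). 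I expect this verification to be the main technical obstacle; it motivates the introduction of the strong vanishing condition as a strengthening of~(\ref{vanishingcondition}).

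Part~(b) is essentially built into the construction of $N$: each summand $c_r^{ijg} v_r g$ of $\kappa(v_i, v_j)$ has $A$-degree $|v_r||g|$, differing from $|v_i||v_j|$ by the generator $|v_i||v_j||v_r|^{-1}|g|^{-1}$ of $N$, so in $A/N$ both degrees agree. Hence $[v_i g, v_j h] = \chi_j(g)\, \kappa(v_i, v_j)\, gh$ is $A/N$-homogeneous of degree $|v_i||v_j||g||h| = |v_i \otimes g|\,|v_j \otimes h|$, verifying Definition~\ref{def:colorLie}(i). Equivalently, the defining relations of $\cH_{\bf q, \kappa}$ are $A/N$-homogeneous, so $\cH_{\bf q, \kappa}$ itself inherits an $A/N$-grading.

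For part~(c), the Yetter-Drinfeld condition, $\kG$-bilinearity and balancedness, color antisymmetry, and the algebra isomorphism $\U(L) \cong \cH_{\bf q, \kappa}$ all transfer directly from Theorem~\ref{thm:main1} with $A$ replaced by $A/N$. For the color Jacobi identity, rather than rerun the cyclic calculation and separately handle non-distinct index triples, I would invoke Example~\ref{superalg}: in any $A/N$-graded associative algebra, the color commutator $[u,v] = uv - \epsilon(|u|,|v|)\, vu$ automatically satisfies the $\epsilon$-Jacobi identity. Applying this to $\cH_{\bf q, \kappa}$, which is $A/N$-graded by part~(b), and noting via the same direct computation as in Theorem~\ref{thm:main1} that the color commutator restricted to $L \subset \cH_{\bf q, \kappa}$ coincides with the prescribed bracket $\kappa(v_i, {}^g v_j)\, gh$, we conclude that $L$ is closed under the color commutator and inherits a color Lie bracket satisfying Jacobi.
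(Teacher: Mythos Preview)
Your proposal is correct and follows the paper's approach closely. For parts~(a) and~(b) your argument is essentially identical to the paper's: you verify that $\epsilon$ descends to $A/N$ by evaluating on generators of $N$ against $|v_k|$ (using the strong vanishing condition) and against $|h|$ (using $G$-invariance via Lemma~\ref{Fixed point}(i)), and you observe that the bracket is $A/N$-graded directly from the definition of $N$.

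The one genuine deviation is in part~(c). The paper simply invokes Theorem~\ref{thm:main1} wholesale to conclude that $L$ is an \emph{ungraded} color Lie ring (which already encompasses antisymmetry, the Jacobi identity, bilinearity, the Yetter-Drinfeld condition, and $\U(L)\cong\cH_{{\bf q},\kappa}$), and then only checks the missing grading axiom. You instead propose to recover the Jacobi identity by realizing the bracket as the color commutator inside the $A/N$-graded associative algebra $\cH_{{\bf q},\kappa}$ and appealing to Example~\ref{superalg}. This is a legitimate and rather clean alternative: it needs the PBW property (so that $L$ embeds in $\cH_{{\bf q},\kappa}$), which is part of the hypothesis, and the fact that the color commutator on $L$ agrees with your bracket, which is exactly the computation already done in the last part of the proof of Theorem~\ref{thm:main1}. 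The paper's route is shorter because Theorem~\ref{thm:main1} is already in hand; yours has the mild conceptual advantage of making the Jacobi identity visibly automatic, including for non-distinct indices, without reopening the cyclic computation.
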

\begin{proof}
The function $\epsilon$ given in the statement extends to an
antisymmetric bicharacter on $A$ as in the proof
of Theorem~\ref{thm:main1}, 
with 
$$\epsilon(|g|,|v_j|)=\chi_j(g)$$
for all $g\in G$ and $1\leq j\leq n$.
We first check that it is well-defined
on $A/N \times A/N$ 
and thus defines an antisymmetric bicharacter on $A/N$.
Suppose the coefficient of $v_r\ot g$ in $\kappa(v_i, v_j)$
is nonzero and consider any $v_k\ot h$ in $L$ with $h$ in $G$.
Note that $$\epsilon(|v_r|^{-1}|g|^{-1}, |v_k \ot h|)
=\epsilon(|v_r||g|, |v_k \ot h|)^{-1}\, .$$ 
Then
$$
\begin{aligned}
\epsilon(|v_i| |v_j|& |v_r|^{-1}|g|^{-1}, |v_k \ot h|)\\
&=
\epsilon(|v_i|, |v_k \ot h|)
\cdot\epsilon(|v_j|, |v_k \ot h|)
\cdot\epsilon(|v_r \ot g|, |v_k \ot h|)^{-1}\\
&=
q_{ik}\ \chi_i^{-1}(h)\
q_{jk}\ \chi_j^{-1}(h)\
q_{rk}^{-1}\ \chi_r(h)\ \chi_k(g)^{-1}\, \\
&=
\chi_r(h)\, \chi_i^{-1}(h)\chi_j^{-1}(h)\
q_{ik}\, q_{jk}\ 
q_{rk}^{-1}\ \chi_k(g)^{-1}\, .
\end{aligned}
$$
But Theorem~\ref{PBWconditions}(1) implies
that $\kappa$ is $G$-invariant and hence
$\chi_r(h)=\chi_i(h)\chi_j(h)$
by Lemma~\ref{Fixed point}.
By the strong vanishing condition~(\ref{strongvanishingcondition}),
the above expression is just $1$
and $\epsilon$ is well-defined on $A/N\times A/N$.

By Theorem~\ref{thm:main1}, $L$ 
is an ungraded color Lie ring.  
We check now that the first condition
in the definition of a color Lie ring holds as well.
Note that in $A$,
$$
|[v_i \ot h_i, v_j \ot h_j] |
=|\kappa(v_i, {}^{h_i} v_j)\ot h_i h_j|
=|\kappa(v_i, v_j)|\ |h_i h_j|\, .
$$
If the coefficient of $v_r\ot g$ in $\kappa(v_i, v_j)$
is nonzero,
then
\begin{equation}\label{ijrg-condition}
|v_i| |v_j|=|v_r||g|\quad\text{ in } A/N.
\end{equation}
Hence $|\kappa(v_i, v_j)|$ is $A/N$-homogeneous
with $|\kappa(v_i, v_j)|=|v_i| | v_j|$,
and the color Lie bracket $[\ ,\ ]$ is $A/N$-graded.
\end{proof}

From now on, we may replace $A = \Z^n\times G$ with $A/N$
where convenient, in order to work with color Lie rings instead 
of ungraded color Lie rings.

The following corollary gives some alternate conditions under which
one obtains the same conclusion as in the theorem. 

\begin{cor}
Fix a finite abelian group $G$ acting diagonally
on $V={\k}^n$ and fixed point free. 
Let  $\cH_{{\bf q},\kappa}$
be a quantum Drinfeld orbifold algebra
defined by a parameter function $\kappa:V\times V\rightarrow V\ot \kG$ 
such that for all $1\leq i,j,r \leq n$,
$$\, ^gv_i = q_{ji}\, q_{ir}\ v_i
\quad\text{ whenever }\quad 
(v_r\ot g)^* \kappa(v_i,v_j)\neq 0\, .$$
Then the conclusion of Theorem~\ref{DOAsAreColored} holds.
\end{cor}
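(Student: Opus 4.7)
The plan is to reduce the corollary to Theorem~\ref{DOAsAreColored} by showing that the stated hypothesis, together with the PBW property inherent in being a quantum Drinfeld orbifold algebra, implies the strong vanishing condition~(\ref{strongvanishingcondition}). Once that is established, the three conclusions listed in Theorem~\ref{DOAsAreColored} transfer verbatim.

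First, I will invoke that $\cH_{\mathbf{q},\kappa}$, being a quantum Drinfeld orbifold algebra, satisfies the PBW property, so Theorem~\ref{PBWconditions} applies. In particular, condition~(1) of that theorem gives that $\kappa$ is $G$-invariant, and Lemma~\ref{Fixed point} then yields, for every quadruple $(i,j,r,g)$ with $c_r^{ijg}\neq 0$, both the character identity $\chi_r = \chi_i\chi_j$ and (using fixed-point-freeness) the index restriction $r\notin\{i,j\}$.

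Next, fix $(i,j,r,g)$ with $c_r^{ijg}\neq 0$; I verify $\chi_k(g) = q_{ik}q_{jk}q_{kr}$ for every $k$ by splitting on the position of $k$ relative to $i,j$. When $k\notin\{i,j\}$ (so $i,j,k$ are distinct and, by the previous step, $r\neq i,j$), the identity is exactly the first equation of Theorem~\ref{PBWconditions}(2). When $k=i$, the desired equality reduces (using $q_{ii}=1$) to $\chi_i(g) = q_{ji}q_{ir}$, which is precisely the hypothesis of the corollary. When $k=j$, one first observes that the quantum antisymmetry $\kappa(v_i,v_j)=-q_{ij}\kappa(v_j,v_i)$ forces $c_r^{jig}\neq 0$, and then applies the corollary's hypothesis with the roles of $i$ and $j$ interchanged to obtain $\chi_j(g)=q_{ij}q_{jr}$, matching the strong vanishing condition at $k=j$ (since $q_{jj}=1$).

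With the strong vanishing condition verified in every case, Theorem~\ref{DOAsAreColored} applies directly and yields all three claims of its conclusion. The main thing to keep straight is the case analysis on $k$, and in particular the use of fixed-point-freeness (via Lemma~\ref{Fixed point}) to ensure $r\notin\{i,j\}$, which is what allows Theorem~\ref{PBWconditions}(2) to be invoked in the generic case $k\notin\{i,j\}$; no substantial obstacle is expected beyond this bookkeeping.
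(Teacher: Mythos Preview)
Your proof is correct and follows essentially the same route as the paper's: both reduce to Theorem~\ref{DOAsAreColored} by verifying the strong vanishing condition via the same case split on $k$, using the corollary's hypothesis at $k=i$, quantum antisymmetry to swap indices at $k=j$, and the PBW conditions for $k\notin\{i,j\}$. The only cosmetic difference is that the paper packages the distinct-$i,j,k$ case by citing Lemma~\ref{Fixed point}(ii) (which says fixed-point-freeness makes PBW condition~(2) equivalent to the vanishing condition), whereas you unpack that lemma's proof and invoke the first equation of Theorem~\ref{PBWconditions}(2) directly after noting $r\notin\{i,j\}$.
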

\begin{proof}
  We argue that the given hypothesis implies the strong vanishing
  condition~(\ref{strongvanishingcondition}) needed for Theorem~\ref{DOAsAreColored}.
  By Theorem~\ref{PBWconditions} and Lemma~\ref{Fixed point},
  the algebra $\cH_{{\bf q},\kappa}$ satisfies
  vanishing condition~(\ref{vanishingcondition}).  Hence
  we need only check
  the vanishing condition when indices coincide.
  If $i=j$, then $\kappa(v_i, v_i)=0$ so there is nothing to check.
    The condition for $k=i$ is the assumption stated.
    Since $\kappa(v_i,v_j)=-q_{ij}\, \kappa(v_j, v_i)$,
    the condition for $k=i$ implies the condition for $k=j$.
\end{proof}

\section{Braided Lie algebras and Hopf algebras}\label{sec:cat-th}

In this section, we view color Lie rings as Lie algebras in a particular category and derive a braided Hopf algebra structure
for certain quantum Drinfeld orbifold algebras. 

\subsection*{Lie algebras in a symmetric monoidal category.}
  For any commutative unital ring 
  $\mathbb{K}$ and $\mathbb{K}$-linear symmetric monoidal category $\C$,
  one can define a Lie algebra as follows
  (see, e.g.,~\cite{Kharchenko-book,KharchenkoShestakov,Pareigis}).
  Denote the monoidal product in $\C$ by $\ot$ and the braiding by $\tau$. A
  {\em Lie algebra} in $\C$ is an object $L$ in $\C$ together with a morphism $[ \;, \;] : L \ot L \to L$ satisfying antisymmetry and  the Jacobi identity, i.e.,
\begin{gather}
[ \;, \;]+[\;, \;] \circ \tau=0 , \label{AntiSymMonCat} \\
[ \;, [ \;, \;]]+[ \;, [ \;, \;]] \circ (\id \otimes \tau) \circ (\tau \otimes \id) + [ \;, [ \;, \;]]\circ (\tau \otimes \id) \circ (\id \otimes \tau)=0 . \label{JacobiMonCat}
\end{gather}

\subsection*{The category for color Lie rings}

Some of our results in this paper can be phrased alternatively in
the language of category theory.  We again fix
an abelian group $A$ and ring $R$ and take 
$\C$ to be the category of $A$-graded $R$-bimodules with
monoidal product $\otimes_R$
and graded morphisms. 
Any bicharacter $\epsilon$ on $A$ gives rise to 
a braiding $\tau$ on $\C$ in the following way:
For objects $V,W$ of $\C$, define
$\tau = \tau_{V,W}: V\ot_R W\rightarrow W\ot_R V$
by
\begin{equation}\label{eqn:braid-def}
   \tau(v\ot_R w) = 
   \epsilon(|v|, |w|)\ w\ot_R v
\end{equation}
   for all $A$-homogeneous $v\in V$, $w\in W$. Then the bicharacter
   condition on $\epsilon$ is equivalent to the braiding condition on $\tau$,
   that is, $\tau$ satisfies the hexagon identities
\[ \tau_{U, V\ot_R W} = (1_V\ot_R\tau_{U,W})(\tau_{U,V}\ot_R 1_W)
  \ \ \ \mbox{ and } \ \ \
   \tau_{U\ot_R V, W} = (\tau_{U,W}\ot_R 1_V)(1_U\ot_R \tau_{V,W})
\]
for all $U,V,W$ in $\C$. Since $\epsilon$ is antisymmetric, $\tau ^2=1$, and $\C$ is a symmetric monoidal category.

\subsection*{Color Lie rings as Lie algebras}
An $(A,\epsilon)$-color Lie ring over $R$
is a Lie algebra in this category $\C$.
Indeed,
Definition~\ref{def:colorLie}(i) states that $[ \;, \;]$ is a graded map
and hence a graded morphism in $\C$
(consider the product $L \times L$ as an object in $\C$ with grading
$(L \times L)_c=\oplus_{a,b:\, ab=c}\, L_a \times L_b$).
Definition~\ref{def:colorLie}(ii) and (iii) are equivalent to \eqref{AntiSymMonCat} and~\eqref{JacobiMonCat}, respectively. A calculation shows that Definition~\ref{def:colorLie}(i) also
implies that the bracket $[ \ , \ ]$ is compatible with the
braiding $\tau$ in the sense that
\begin{equation}\label{eqn:compatibility-A}
  \tau ( 1\ot_R [ \ , \ ]) = ( [ \ , \ ] \ot_R 1)(1\ot_R \tau )(\tau\ot_R 1) .
\end{equation}
A second compatibility condition
\[
   \tau ( [ \ , \ ] \ot_R 1) = (1\ot_R [ \ , \ ]) (\tau\ot_R 1)(1\ot_R \tau)
\]
follows from the first since $\tau^2=1$; just multiply both sides of the first
condition by $\tau$ on the left and by $(\tau\ot_R 1)(1\ot_R\tau)$ on the right.
(More generally, these two compatibility conditions are in fact
necessary conditions to have a Lie algebra in a symmetric
monoidal category: The braiding $\tau$ by definition consists of 
functorial isomorphisms and thus must satisfy commutative diagrams
corresponding to morphisms in the category. 
For the particular morphism given by the bracket
operation, the compatibility conditions as given above are
equivalent to commuting diagrams arising from morphisms from
three copies of the Lie algebra to two.)

\subsection*{Universal enveloping algebras in the category.}
We next observe that color universal enveloping algebras are  universal enveloping algebras in the specific category above.
One can define the notion of {\em universal enveloping algebra} of a
Lie algebra in any symmetric monoidal category $\C$ via a universal property
as follows 
(see, e.g.,~\cite{Kharchenko-book,KharchenkoShestakov,Pareigis}).
We first define an associative algebra in $\C$ to be an object $B$ together with a
morphism $m : B \otimes B \to B$ satisfying
$ m \circ (m \otimes 1)=m \circ (1 \otimes m)$.
A calculation shows that $B$ defines a Lie algebra in $\C$, denoted
Lie$(B)$, by setting $[\;,\;] :=m -m \circ \tau$.
For a Lie algebra $L$ in $\C$, the universal enveloping algebra $\U(L)$ is an associative algebra together with an injective map $i : L \hookrightarrow \U(L)$ satisfying the universal property that for any associative algebra $B$ in $\C$ and Lie algebra map $\phi : L \to \text{Lie}(B)$, there exists a unique map of associative algebras $\hat{\phi} : \U(L) \to B$ such that $\phi=\hat{\phi} \circ i$.
Note that such an associative algebra $\U(L)$ may not exist in general. If it does exist, then it is unique.
In the case where $\C$ is the category of $A$-graded $R$-bimodules described above,
the universal enveloping algebra exists for all Lie algebras in $\C$,
and it is given explicitly in Definition~\ref{def of env alg}.

\subsection*{Hopf algebra structures}

We now point out
a Hopf algebra structure on color universal enveloping algebras
defined via the category setting.
Generally, a {\em Hopf algebra} in a $\k$-linear symmetric monoidal category $\C$
is an object in $\C$ with defining maps
(unit, multiplication, counit, comultiplication, antipode) all morphisms
in the category satisfying the standard Hopf algebra properties.
We sometimes simply speak of a {\em braided Hopf algebra} when it is clear
from context which braiding and category are intended.

In certain categories, universal enveloping algebras always exhibit
the structure of a braided Hopf algebra.
Let $\C$ be a $\mathbb{K}$-linear symmetric monoidal category for which the universal enveloping algebra exists for all Lie algebras in $\C$ and the assignment $L \mapsto \U(L)$ is functorial.
Then $\U(L)$ has a braided Hopf algebra structure, i.e., it has a counit map
$\epsilon : \U(L) \to \U(0)={\bf 1}$
induced by the Lie algebra map $L \to 0$,
a comultiplication $\Delta: \U(L) \to \U(L \times L) \iso \U(L) \ot \U(L)$ induced by the diagonal map $L \to L \times L$, and an antipode $S : \U(L) \to \U(L)$ induced by $-1: L \to L$.
These maps also satisfy some properties with respect to the braiding;
e.g., see~\cite{Kharchenko-book,KharchenkoShestakov}.

The morphisms giving color universal enveloping algebras the structure
of  braided Hopf algebras are given explicitly in the next proposition,
a consequence of~\cite[Section~4]{KharchenkoShestakov}.
See also~\cite[Proposition~2.7]{FischmanMontgomery} for a special case. 

\begin{prop}\label{prop:Hopf}
Let $L$ be an $(A,\epsilon)$-color Lie ring over $R$,
$\U(L)$ its universal enveloping algebra, and $\C$ the category
of $A$-graded $R$-bimodules with monoidal product $\ot_R$.  
Then  $\U(L)$ is a Hopf algebra in $\C$:
\begin{enumerate}
\item The tensor algebra $T_R(L)$ of $L$ 
 is a Hopf algebra in $\C$ with coproduct, counit, and antipode defined by
\[ \Delta(l)=l \otimes_R 1+1 \otimes_R l, \qquad \epsilon(l)=0, \qquad S(l)=-l, 
\qquad \mbox{ for all } l \in L. \]
\item The ideal $J$ is a Hopf ideal in $\C$, and consequently $\U(L)$ is a Hopf algebra
in $\C$.
\end{enumerate}
\end{prop}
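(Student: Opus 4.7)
The plan is to use the universal property of the tensor algebra $T_R(L)$ as the free associative algebra in the braided symmetric monoidal category $\C$ to lift the prescribed structure maps on $L$ to a braided Hopf algebra structure on $T_R(L)$, and then to verify by a direct computation on defining generators that $J$ is a Hopf ideal.

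For part (1), I would first observe that the assignments $l \mapsto l\ot_R 1 + 1\ot_R l$, $l \mapsto 0$, and $l \mapsto -l$ are $A$-graded $R$-bimodule maps out of $L$ and hence morphisms in $\C$. Equip $T_R(L)\ot_R T_R(L)$ with the braided tensor product algebra structure determined by the braiding~\eqref{eqn:braid-def}, so that on $A$-homogeneous elements
\[ (a\ot_R b)(c\ot_R d) = \epsilon(|b|,|c|)\, ac \ot_R bd. \]
By the universal property of $T_R(L)$ in $\C$, the coproduct map extends uniquely to an algebra morphism $\Delta: T_R(L) \to T_R(L)\ot_R T_R(L)$, the counit extends to an algebra morphism $T_R(L) \to R$, and the antipode extends to a braided anti-algebra morphism $S$. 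The Hopf axioms (coassociativity, counit property, and the antipode identity) hold trivially on generators $l \in L$ and then propagate to $T_R(L)$ because both sides of each axiom are (co)algebra morphisms in $\C$ agreeing on a generating set.

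For part (2), take $A$-homogeneous $u, v \in L$ and consider the defining generator $\rho_{u,v} := u\ot_R v - \epsilon(|u|,|v|)\, v\ot_R u - [u,v]$ of $J$. A direct calculation using the braided multiplication on $T_R(L)\ot_R T_R(L)$ yields
\[ \Delta(uv) = (u\ot_R 1 + 1\ot_R u)(v\ot_R 1 + 1\ot_R v) = uv\ot_R 1 + u\ot_R v + \epsilon(|u|,|v|)\, v\ot_R u + 1\ot_R uv, \]
and the analogous formula holds after swapping $u$ and $v$. Combining these with $\Delta([u,v]) = [u,v]\ot_R 1 + 1\ot_R [u,v]$ (valid since $[u,v]\in L$), the cross-terms pair off using $\epsilon(|u|,|v|)\epsilon(|v|,|u|) = 1$ and one obtains
\[ \Delta(\rho_{u,v}) = \rho_{u,v}\ot_R 1 + 1\ot_R \rho_{u,v} \in J\ot_R T_R(L) + T_R(L)\ot_R J. \]
The counit annihilates $\rho_{u,v}$ by multiplicativity, and the braided anti-multiplicativity $S(ab) = \epsilon(|a|,|b|)\, S(b) S(a)$ on homogeneous elements gives $S(\rho_{u,v}) = -\rho_{u,v} \in J$. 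Hence $J$ is a Hopf ideal and the Hopf structure descends to $\U(L) = T_R(L)/J$.

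The main technical obstacle is the careful set-up of the braided tensor product algebra structure on $T_R(L)\ot_R T_R(L)$ and the verification that the extensions $\Delta$, $\epsilon$, $S$ are morphisms in $\C$ compatible with this structure. Once this is in place, the crucial point for part (2) is the primitivity of the deformed commutator $u\ot_R v - \epsilon(|u|,|v|)\, v\ot_R u$, which holds precisely because the braiding redistributes the cross-terms so that they cancel; this in turn relies on the antisymmetry of $\epsilon$, equivalently $\tau^2 = 1$. Detailed verifications can be extracted from~\cite[Section~4]{KharchenkoShestakov}.
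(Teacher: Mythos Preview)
Your proof is correct and follows the standard argument. Note, however, that the paper does not actually supply its own proof of this proposition: it is stated as ``a consequence of~\cite[Section~4]{KharchenkoShestakov}'' (with~\cite[Proposition~2.7]{FischmanMontgomery} cited for a special case), and no further argument is given. Your sketch is precisely the argument one would extract from that reference, so in effect you have written out what the paper chose to omit. The key mechanism you identify---that the braided commutator $u\ot_R v - \epsilon(|u|,|v|)\, v\ot_R u$ is primitive because the antisymmetry $\epsilon(|u|,|v|)\epsilon(|v|,|u|)=1$ forces the cross-terms to cancel---is exactly the point, and your computation of $S(\rho_{u,v})=-\rho_{u,v}$ via braided anti-multiplicativity is correct.
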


We wish to conclude that there is a braided Hopf structure on
the quantum Drinfeld orbifold algebras $\Hqk$ of Theorem~\ref{DOAsAreColored}.
We first make an observation
interesting in its own right:
The strong vanishing condition assumed
in Theorem~\ref{DOAsAreColored}
is equivalent to compatibility of the bracket operation
$[ \ , \ ]$ with the braiding $\tau$.
We give a direct proof for interest, although we use similar arguments
in other sections.

\begin{prop}\label{prop:svc}
Let $G$ be a finite group acting diagonally on $V= \k^n$.
Let $\Hqk$ be a quantum Drinfeld orbifold algebra satisfying
the vanishing condition~(\ref{vanishingcondition}).
The strong vanishing condition~(\ref{strongvanishingcondition}) is equivalent
to the compatibility~(\ref{eqn:compatibility-A}) 
of the braiding $\tau$ defined by~(\ref{eqn:braid-def})
with the bracket $[ \ , \ ]$ on $L=V\ot \kG$ given in Theorem~\ref{thm:main1}(b).
\end{prop}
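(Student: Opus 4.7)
The plan is to evaluate both sides of the compatibility identity~(\ref{eqn:compatibility-A}) on a pure tensor $v_i \otimes_{\kG} v_j \otimes_{\kG} v_k$ (with trivial group components) and compare the resulting scalar identities with the strong vanishing condition. Since both sides are morphisms of $A$-graded $\kG$-bimodules and every element of $L \otimes_{\kG} L \otimes_{\kG} L$ reduces, via the balanced-tensor relation $v_i g \otimes_{\kG} v_j h = \chi_j(g)\, v_i \otimes_{\kG} v_j(gh)$ applied iteratively, to a right-$\kG$-multiple of such a pure tensor, agreement on pure tensors forces agreement everywhere.

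Expanding $\kappa(v_j, v_k) = \sum_{r,g} c_r^{jkg}\, v_r g$ and using the braiding formula with $\epsilon(|v_i|, |v_r|) = q_{ir}$ and $\epsilon(|v_i|, |g|) = \chi_i^{-1}(g)$, the left-hand side $\tau\bigl(v_i \otimes_{\kG} \kappa(v_j, v_k)\bigr)$ becomes $\sum_{r,g} c_r^{jkg}\, q_{ir}\, \chi_i^{-1}(g)\, v_r g \otimes_{\kG} v_i$; rewriting $v_r g \otimes_{\kG} v_i = \chi_i(g)\, v_r \otimes_{\kG} v_i g$ collapses this to $\sum_{r,g} c_r^{jkg}\, q_{ir}\, v_r \otimes_{\kG} v_i g$. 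The right-hand side, obtained by successively applying $\tau \otimes_{\kG} 1$, $1 \otimes_{\kG} \tau$, and $[\ ,\ ] \otimes_{\kG} 1$, equals $q_{ij}\, q_{ik}\, \kappa(v_j, v_k) \otimes_{\kG} v_i$, which after the same collapse becomes $\sum_{r,g} c_r^{jkg}\, q_{ij}\, q_{ik}\, \chi_i(g)\, v_r \otimes_{\kG} v_i g$.

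Under the identification $L \otimes_{\kG} L \cong V \otimes V \otimes \kG$, the vectors $v_r \otimes_{\kG} v_i g$ are $\k$-linearly independent as $r$ and $g$ vary with $i$ fixed, so equality for all $i, j, k$ amounts to $q_{ir} = q_{ij}\, q_{ik}\, \chi_i(g)$, equivalently $\chi_i(g) = q_{ir}\, q_{ji}\, q_{ki}$, for every $i$ whenever $c_r^{jkg} \neq 0$. Relabeling $(j, k, i) \mapsto (i, j, k)$ turns this into $\chi_k(g) = q_{ik}\, q_{jk}\, q_{kr}$ whenever $c_r^{ijg} \neq 0$, which is exactly the strong vanishing condition~(\ref{strongvanishingcondition}).

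The main obstacle will be the careful balanced-tensor bookkeeping so that the powers of $\chi_i(g)$ picked up from moving group elements across $\otimes_{\kG}$ cancel correctly on the left but accumulate on the right; once the reduction to trivial-group pure tensors is set up, the equivalence is just coefficient matching. Observe also that the vanishing condition~(\ref{vanishingcondition}) (assumed throughout the proposition) already handles the distinct-index cases via Theorem~\ref{PBWconditions}, and the upgraded ``strong'' form is precisely what the coincident-index slots $k \in \{i, j\}$ of the compatibility identity contribute.
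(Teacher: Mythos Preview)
Your proof is correct and follows essentially the same approach as the paper: both evaluate the compatibility identity on pure tensors with trivial group components, reduce to a scalar comparison, and recognize the resulting equation as the strong vanishing condition. You are somewhat more explicit than the paper about the balanced-tensor reduction and the linear independence argument, and your final remark about the role of the vanishing condition is a helpful gloss (though strictly speaking the computation already establishes the full equivalence without invoking Theorem~\ref{PBWconditions}).
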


\begin{proof}
The compatibility condition~(\ref{eqn:compatibility-A}),
applied to $v_k\ot_{\kG} v_i\ot_{\kG} v_j$, may be written
\[
   \tau(v_k\ot _{\kG} \kappa(v_i,v_j)) = ({[} \ , \ {]}\ot_{\kG} 1) (1\ot_{\kG}\tau)
    (\tau(v_k,v_i)\ot_{\kG} v_j),
\]
and applying the definition~(\ref{eqn:braid-def}) of $\tau$, this is
equivalent to 
\[
  \epsilon ( |v_k|, |\kappa(v_i,v_j)|)  =
   \epsilon (|v_k|,|v_i|)\, \epsilon(|v_k|,|v_j|) .
\]
In turn, this equation may be rewritten as  
\[
   q_{kr}\, \chi_k^{-1}(g) = q_{ki}\, q_{kj}
\]
for all $r,g$ for which $c^{ijg}_r\neq 0$.
This is required to hold
for all $i,j,k$, and that is precisely the strong
vanishing condition~(\ref{strongvanishingcondition}).
We note that the compatibility condition~(\ref{eqn:compatibility-A}) applied
more generally to elements $v_k g\ot_{\kG} v_i h \ot_{\kG} v_j l$
follows from this case $g=h=l=1$ by the definitions of
$\tau$, $\kappa$, and $ [ \ , \ ]$.
\end{proof}

The following corollary is an immediate consequence of 
Theorem~\ref{DOAsAreColored} and  Proposition~\ref{prop:Hopf}.

\begin{cor}\label{cor:Hopf-alt}
Let $G$ be a finite abelian group acting diagonally on the vector space $V$.
Let $\Hqk$ be a corresponding quantum Drinfeld orbifold algebra for which
the strong vanishing condition~(\ref{strongvanishingcondition}) holds.
Then $\Hqk$ is a braided Hopf algebra.
\end{cor}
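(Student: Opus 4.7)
The plan is to chain the two immediately preceding results. By Theorem~\ref{DOAsAreColored}, the hypotheses of the corollary (a diagonal action of a finite abelian group $G$ on $V$ and the strong vanishing condition on $\kappa$) guarantee that $L = V \otimes \kG$ carries the structure of a Yetter-Drinfeld $(A/N,\epsilon)$-color Lie ring for the bicharacter and bracket described there, and moreover that the universal enveloping algebra $\U(L)$ in this $(A/N,\epsilon)$-setting is isomorphic as an algebra to $\Hqk$. Thus the first step is simply to invoke Theorem~\ref{DOAsAreColored} to replace $\Hqk$ with $\U(L)$.

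The second step is to equip $\U(L)$ with a braided Hopf algebra structure via Proposition~\ref{prop:Hopf}, applied to the category $\C$ of $A/N$-graded $\kG$-bimodules with braiding $\tau$ defined by the bicharacter $\epsilon$ as in~(\ref{eqn:braid-def}). Proposition~\ref{prop:Hopf} provides an explicit coproduct, counit, and antipode on $T_{\kG}(L)$ in $\C$ and verifies that the defining ideal $J$ of $\U(L)$ is a Hopf ideal in $\C$. Consequently $\U(L)$ is a braided Hopf algebra in $\C$, and transporting this structure across the algebra isomorphism $\U(L)\isomap \Hqk$ of Theorem~\ref{DOAsAreColored}(c) endows $\Hqk$ with the desired braided Hopf algebra structure.

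The only point that needs to be checked carefully, and which accounts for the role of the strong vanishing condition rather than merely the ordinary vanishing condition, is that the bracket on $L$ is compatible with the braiding $\tau$ in the sense of~(\ref{eqn:compatibility-A}); without this, one would not actually have a Lie algebra in the monoidal category $\C$, and Proposition~\ref{prop:Hopf} would not apply. This compatibility, however, is precisely the content of Proposition~\ref{prop:svc}, which shows that the strong vanishing condition is equivalent to~(\ref{eqn:compatibility-A}). Thus the main (and really only) obstacle has already been dispatched in the previous proposition, and the corollary follows by assembling Theorem~\ref{DOAsAreColored}, Proposition~\ref{prop:svc}, and Proposition~\ref{prop:Hopf}.
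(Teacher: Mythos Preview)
Your proof is correct and follows essentially the same route as the paper: invoke Theorem~\ref{DOAsAreColored} to identify $\Hqk$ with $\U(L)$ for a (graded) $(A/N,\epsilon)$-color Lie ring $L$, then apply Proposition~\ref{prop:Hopf} in the category of $A/N$-graded $\kG$-bimodules. One small remark: your separate appeal to Proposition~\ref{prop:svc} is not strictly necessary, since Theorem~\ref{DOAsAreColored} already establishes that the bracket is $A/N$-graded (Definition~\ref{def:colorLie}(i)), and the discussion preceding~(\ref{eqn:compatibility-A}) shows that this grading condition alone forces compatibility with $\tau$; Proposition~\ref{prop:svc} is included in the paper for independent interest rather than as a needed step here.
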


\begin{proof}
By Theorem~\ref{DOAsAreColored}(c), $L=V\ot \kG$ is an $(A/N,\epsilon)$-color
Lie ring with universal enveloping algebra ${\mathcal{U}}(L)\cong \Hqk$.
Let $\C$ be the category of $A/N$-graded $\kG$-bimodules with 
monoidal product $\ot_{\kG}$, graded morphisms,
and braiding $\tau$ given by~(\ref{eqn:braid-def}).
By Proposition~\ref{prop:Hopf}, ${\mathcal{U}}(L)\cong \Hqk$
is a Hopf algebra in $\C$, that is, a braided Hopf algebra.
\end{proof}

\section{Color universal enveloping algebras 
as \\
$ \ \ {}_{}\ \ \ $
quantum Drinfeld orbifold algebras}
\label{SecondMainThm}
In this section, we 
determine those Yetter-Drinfeld color
Lie rings that arise from quantum Drinfeld orbifold algebras
and
establish a converse to Theorem~\ref{thm:main1}
and Theorem~\ref{DOAsAreColored}.
But first we discuss positive and negative parts
of color Lie rings.

\subsection*{Positive and negative parts of color Lie rings}
Recall that in any $(A,\epsilon)$-color Lie ring, 
$\epsilon(|x|, |x|)=\pm 1$ for all $A$-homogeneous $x$, 
introducing a $\ZZ/2\ZZ$-grading.
In fact,
$$
[x,x]=0
\quad\text{ or }\quad 
\epsilon(|x|, |x|) = -1
\quad\text{ for all $A$-homogeneous } x.
$$ 
For a color Lie ring $L=V\ot \kG$ with $G$ a finite group acting on $V$, we
define the {\em positive} and
{\em negative parts} of $V$
(just as for color Lie algebras),
$$
\begin{aligned}
V_-&=\{A\text{-homogeneous}\ v\in V: \epsilon(|v|,|v|)=-1\}
\quad\text{ and}\\
V_+&=\{A\text{-homogeneous}\ v\in V: \epsilon(|v|,|v|)=+1\},
\end{aligned}
$$
and define
$$L_+=V_+\ot \kG
\quad\text{ and }\quad
L_-=V_- \ot \kG
$$
so that $L=L_-\oplus L_+$.
We say a color Lie ring $L=V\ot \kG$ has
{\em purely positive part} when $L=L_+$.
We will see that Yetter-Drinfeld color Lie rings
with purely positive part arise
from quantum Drinfeld orbifold algebras.
(See Example~\ref{superalg} for a
color Lie ring with $x$ 
satisfing $[x, x]\neq 0$.)

Note that
for a Yetter-Drinfeld color Lie ring
$L=V\ot \kG$,
the set
$$\{q_{ij}=\epsilon(|v_i|, |v_j|)\}$$
may fail to be a quantum system of parameters.
Indeed, if
$[v_i\ot 1_G, v_i\ot 1_G]\neq 0$
for some $i$, then 
$q_{ii}=\epsilon(|v_i|, |v_i|)=-1$.
In fact, if
$[v_i\ot 1_G, v_i\ot 1_G]= 0$
for some $i$ but
$q_{ii}=\epsilon(|v_i|, |v_i|)=-1$,
the element $v_i^2$ is nilpotent
in the universal enveloping algebra
$\mathcal{U}(L)$;
see Example~\ref{superalg}.
In such cases, the set $\{q_{ij}\}$
could be used to define
truncated quantum Drinfeld Hecke algebras as in 
Grimley and Uhl~\cite{GrimleyUhl}.

The next proposition shows how the last
condition in the definition
of a Yetter-Drinfeld color Lie ring 
arises from the positive and negative
parts.

\begin{prop}\label{LittleLemma}
    Suppose
  that
  $L=V\ot \kG$ is an $(A,\epsilon)$-color
  Lie ring over $\kG$ with $A$-grading on $L$
  induced from gradings on $V$ and $G$.
  Assume $G$ acts diagonally
  on $V\cong \k^n$ with respect to a basis
  $v_1, \ldots, v_n$.
  Then 
for all $1\leq i\leq n$ and $g$ in $G$,
\begin{equation*}
[v_i\ot 1_G, v_i\ot g]=0 
\quad\text{ or }\quad
^g v_i
   = - \epsilon(|v_i|, |v_i|)\ \epsilon(|g|,|v_i|) \ v_i \, .
\end{equation*}
If, in addition, $L$ is Yetter-Drinfeld
    with purely positive part, then
$[v_i\ot 1_G, v_i\ot g]=0$ for all $g$ in $G$.
\end{prop}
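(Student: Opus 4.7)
The plan is to pit $\epsilon$-antisymmetry of the bracket against the balanced $\kG$-bimodule structure on $L$ in order to derive two competing expressions for $[v_i\ot 1_G,\, v_i\ot g]$, then compare them. First, $\epsilon$-antisymmetry (Definition~\ref{def:colorLie}(ii)) gives
\[
[v_i\ot 1_G,\, v_i\ot g] \,=\, -\epsilon\bigl(|v_i|,\, |v_i|\,|g|\bigr)\,[v_i\ot g,\, v_i\ot 1_G].
\]
Next, the balanced condition $[xr,y] = [x,ry]$ applied with $x = y = v_i\ot 1_G$ and $r = g$, combined with the bimodule formula~(\ref{eqn:YD2}) and the diagonal action ${}^g v_i = \chi_i(g)\, v_i$, converts the right-hand bracket into
\[
[v_i\ot g,\, v_i\ot 1_G] \,=\, [v_i\ot 1_G,\, {}^g v_i\ot g] \,=\, \chi_i(g)\,[v_i\ot 1_G,\, v_i\ot g],
\]
where the last equality pulls the scalar out via $\k$-bilinearity in the second slot.

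Equating the two produces
\[
\bigl(1 + \epsilon(|v_i|,\, |v_i|\,|g|)\,\chi_i(g)\bigr)\,[v_i\ot 1_G,\, v_i\ot g] \,=\, 0.
\]
So either the bracket vanishes, or $\chi_i(g) = -\epsilon(|v_i|,\, |v_i|\,|g|)^{-1}$. Expanding using the bicharacter property $\epsilon(|v_i|,\,|v_i|\,|g|) = \epsilon(|v_i|,|v_i|)\,\epsilon(|v_i|,|g|)$, the antisymmetry identity $\epsilon(|v_i|,|g|)^{-1} = \epsilon(|g|,|v_i|)$, and the fact that $\epsilon(|v_i|,|v_i|) = \pm 1$, the second alternative rewrites as ${}^g v_i = -\epsilon(|v_i|,|v_i|)\,\epsilon(|g|,|v_i|)\,v_i$, which is precisely the claimed dichotomy.

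For the final statement, assume $L$ is Yetter-Drinfeld with $L = L_+$. Then $\epsilon(|v_i|,|v_i|) = 1$ for every $i$, and Definition~\ref{def:YDcolorLie}(ii) supplies ${}^g v_i = \epsilon(|g|,|v_i|)\,v_i$. If $[v_i\ot 1_G,\, v_i\ot g]\neq 0$, the dichotomy would instead force $\chi_i(g) = -\epsilon(|g|,|v_i|)$; comparing this with the Yetter-Drinfeld identity yields $\epsilon(|g|,|v_i|) = -\epsilon(|g|,|v_i|)$ in $\k^*$, which is impossible since $\k$ has characteristic zero. Hence the bracket vanishes for all $g\in G$.

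The one step that needs care is the balanced move: it is precisely the diagonal-action hypothesis that lets the left multiplication by $g$ in $g(v_i\ot 1_G) = {}^g v_i\ot g$ collapse to the scalar $\chi_i(g)$, turning the antisymmetry identity into a genuine constraint on $\chi_i(g)$ rather than a tautology. Once the resulting scalar equation is in hand, everything else is a routine manipulation of the bicharacter and sign conventions.
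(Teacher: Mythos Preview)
Your proof is correct and follows essentially the same approach as the paper's: both combine the $\kG$-balanced property of the bracket with $\epsilon$-antisymmetry to produce a scalar equation $[v_i\ot 1_G, v_i\ot g] = (\text{scalar})\,[v_i\ot 1_G, v_i\ot g]$, then read off the dichotomy. The only cosmetic difference is the order of operations---the paper first moves $g$ across via balancedness (writing $v_i\ot g = g({}^{g^{-1}}v_i\ot 1_G)$ inside the second slot) and then applies antisymmetry, whereas you apply antisymmetry first and then move $g$ through the first slot; the resulting scalar identities and the treatment of the Yetter-Drinfeld case are the same.
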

\begin{proof} 
  Suppose $\, ^g v_i = \chi_i(g)\ v_i$.
  As the bracket is $\kG$-balanced 
and $|v_i\ot g|=|v_i| |g|$, 
$$
\begin{aligned} 
\ [ v_i \ot 1_G, v_i \ot g]
&= [v_i\ot 1_G,\, g( {}^{g^{-1}}\hspace{-.5ex} v_i \ot 1_G)]
= [v_i\ot g, {}^{g^{-1}}\hspace{-.5ex}v_i \ot 1_G]\\
&= \chi_i(g)^{-1}\ [v_i\ot g, v_i \ot 1_G]\, \\
&= 
\chi_i(g)^{-1}\ 
(-1)\ \epsilon(|v_i\ot g|, |v_i\ot 1_G|)\
[v_i\ot 1_G, v_i \ot g]\, .
\end{aligned}
$$
Then for nonzero $[ v_i \ot 1_G, v_i \ot g]$,
since $\epsilon$ is a bicharacter and $|1_G|=1_A$, 
\begin{equation*}
\begin{aligned}
-\chi_i(g) 
&= \epsilon(|v_i\ot g|, |v_i\ot 1_G|)
= \epsilon\big(|v_i| |g|, |v_i| |1_G| \big)\\
&= \epsilon\big(|v_i| |g|, |v_i| \big)
= \epsilon(|v_i|, |v_i|)\ 
\epsilon(|g|, |v_i|)\, ,
\end{aligned}
\end{equation*}
establishing the first claim.
If $L$ is Yetter-Drinfeld
    with purely positive part, then
    $$ \epsilon(|g|, |v_i|) \ v_i
= \, ^gv_i
=-\epsilon(|v_i|, |v_i|)\,
\epsilon(|g|, |v_i|)\ v_i
=-\epsilon(|g|, |v_i|)
\ v_i\, ,
$$
from which the second claim follows.
\end{proof}

\subsection*{Yetter-Drinfeld color Lie rings with purely positive part}
We show now that every Yetter-Drinfeld color Lie ring over $\kG$
with purely positive part
corresponds to a quantum Drinfeld orbifold algebra.
We consider an arbitrary (graded) Yetter-Drinfeld color Lie
ring defined by some abelian grading group $A$ and
bicharacter $\epsilon$.

\begin{thm}\label{thm:main2}
Let $L=V\ot \kG$ be a Yetter-Drinfeld $(A,\epsilon)$-color Lie ring
over $\kG$ for some group $G$ with purely positive part.
Then there exist parameters $\kappa: V \times V \rightarrow V \otimes \kG$
and ${\bf q}$
so that the universal enveloping algebra
$\mathcal{U}(L)$
is
isomorphic to the quantum
Drinfeld orbifold algebra 
$\cH_{{\bf q},\kappa}$.
\end{thm}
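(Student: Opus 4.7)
The plan is to run Theorem~\ref{thm:main1} in reverse: from $L$, extract $\bf q$ and $\kappa$, verify the PBW conditions of Theorem~\ref{PBWconditions}, and then identify $\U(L)$ with the resulting quantum Drinfeld orbifold algebra $\Hqk$. First I would choose an $A$-homogeneous basis $v_1,\ldots,v_n$ of $V$ (available since $V$ is $A$-graded) and set $q_{ij}:=\epsilon(|v_i|,|v_j|)$. Antisymmetry of $\epsilon$ gives $q_{ji}=q_{ij}^{-1}$, and the purely positive hypothesis gives $q_{ii}=\epsilon(|v_i|,|v_i|)=+1$, so $\bf q$ is a quantum system of parameters. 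The Yetter-Drinfeld condition ${}^gv_i=\epsilon(|g|,|v_i|)\,v_i$ makes $G$ act diagonally with $\chi_i(g)=\epsilon(|g|,|v_i|)$. Then define
\[
\kappa(v_i,v_j):=[v_i\otimes 1_G,\,v_j\otimes 1_G]\ \in\ V\otimes\kG,
\]
extended bilinearly; color antisymmetry gives $\kappa(v_i,v_j)=-q_{ij}\,\kappa(v_j,v_i)$, the required quantum antisymmetry. A short calculation with $\kG$-bilinearity and balancing of the bracket, together with ${}^g v_j=\chi_j(g)v_j$, shows $[v_i\otimes g,\,v_j\otimes h]=\kappa(v_i,{}^g v_j)\,gh$, matching the extension of $\kappa$ used in Theorem~\ref{PBWconditions}.

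The technical heart of the proof is verifying the three conditions of Theorem~\ref{PBWconditions}. For $G$-invariance, $\kG$-bilinearity gives $\kappa({}^g v_i,{}^g v_j)=g\,\kappa(v_i,v_j)\,g^{-1}$; since $G$ is abelian, expanding both sides in the basis $\{v_r\otimes g'\}$ reduces this to the character identity $\chi_r(g)=\chi_i(g)\chi_j(g)$ whenever $c_r^{ijg'}\neq 0$, which is exactly what $G$-invariance requires for a diagonally acting group. For condition~(2), the $A$-grading of the bracket forces $\kappa(v_i,v_j)\in L_{|v_i||v_j|}$, so $c_r^{ijg}\neq 0$ implies $|g|=|v_i||v_j||v_r|^{-1}$ in $A$; the bicharacter identity then yields
\[
\chi_k(g)=\epsilon(|g|,|v_k|)=\epsilon(|v_i||v_j||v_r|^{-1},|v_k|)=q_{ik}\,q_{jk}\,q_{kr},
\]
which is the first half of condition~(2) (in fact the strong vanishing condition). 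The second half reduces, via the same grading argument applied to $c_i^{ijg}$ and $c_k^{jkg}$, to $q_{jk}-\chi_k(g)=0$ and $1-q_{ij}\chi_i(g)=0$ respectively (in each case $|g|=|v_j|$). For condition~(3), the simplified form from Corollary~\ref{PBWconditions-funnycondition} reads
\[
\sum_{\circlearrowleft} q_{jk}\,\kappa(v_k,\kappa(v_i,v_j))=0
\]
for distinct $i,j,k$; this is literally the $\epsilon$-Jacobi identity applied to $v_i\otimes 1_G$, $v_j\otimes 1_G$, $v_k\otimes 1_G$ once nested brackets are identified with nested $\kappa$'s via the extension formula and the coefficients $\epsilon(|v_k|,|v_i|)=q_{ki}$ etc.\ are substituted.

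Having established the PBW property, $\Hqk$ is a quantum Drinfeld orbifold algebra, and the isomorphism $\U(L)\cong\Hqk$ follows exactly as in the proof of Theorem~\ref{thm:main1}(c): the canonical $\k$-linear isomorphism $T_{\kG}(V\otimes\kG)\isomap T(V)\rtimes G$ sending $v_ig\otimes_{\kG}v_jh\mapsto v_i\,{}^gv_j\,gh$ carries the defining generators of $J$ to (scalar multiples of) the defining relations of $\Hqk$ and vice versa. The main obstacle is arranging the simultaneous vanishing in PBW condition~(2): both summands must vanish independently, which works only because the $A$-grading of $\kappa$ tightly constrains the support of each $\kappa(v_i,v_j)$ via the identity $|g|=|v_i||v_j||v_r|^{-1}$. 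The purely positive hypothesis enters only once, to ensure $q_{ii}=1$; without it, the set $\{q_{ij}\}$ would fail to be a quantum system of parameters and one would be forced into a truncated quotient of the sort indicated before Proposition~\ref{LittleLemma}.
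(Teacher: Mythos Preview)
Your proposal is correct and follows essentially the same route as the paper: choose an $A$-homogeneous basis, set $q_{ij}=\epsilon(|v_i|,|v_j|)$ and $\kappa(v_i,v_j)=[v_i\otimes 1_G,v_j\otimes 1_G]$, and verify the PBW conditions using $G$-invariance from $\kG$-bilinearity/balancedness, the strong vanishing condition from the $A$-grading of the bracket, and the Jacobi identity for the reduced form of condition~(3). The only difference is packaging: the paper appeals directly to Corollary~\ref{PBWconditions'} (checking $(1')$, $(2')$, $(3')$), whereas you work through Theorem~\ref{PBWconditions} itself and separately dispose of the second line of condition~(2)---an extra step the paper avoids, since $(1')$, $(2')$, $(3')$ already imply all three PBW conditions.
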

\begin{proof}
  Let $v_1,\ldots, v_n$ be an $A$-homogeneous basis of $V$
  and recall
  that $G$ must act diagonally with respect to this basis
  since $L$ is Yetter-Drinfeld.
  Define a parameter function $\kappa: V\ot V \rightarrow V\otimes \kG$
by
$$
\kappa(v_i, v_j)=[v_i\ot 1_G, v_j\ot 1_G]
\quad\text{ for all } 1\leq i,j \leq n\, 
$$
and a quantum system of parameters ${\bf q}=\{ q_{ij}\}$
(using that  $L$ is purely positive) by
$$q_{ij}=\epsilon(|v_i|, |v_j|)
\quad\text{ for all } 1\leq i , j \leq n\, .
$$
Then
$\kappa$ is quantum symmetric as $\epsilon$
is antisymmetric.
We show that $\cH_{{\bf q},\kappa}$ is a quantum Drinfeld orbifold algebra by checking
Conditions~(1$'$),~(2$'$),~and~(3$'$) of Corollary~\ref{PBWconditions'}.

Condition~(3$'$) follows from the $\epsilon$-Jacobi identity
as $L$ is purely positive.

Condition~(1$'$), the $G$-invariance of $\kappa$, follows from the fact
that $[\ ,\  ]$ is $\kG$-bilinear and $\kG$-balanced:
$$\begin{aligned}
\kappa( ^gv_i,\ ^g v_j)
&=& &[\, ^gv_i\ot gg^{-1},\  ^gv_j\ot 1_G]&
&=& &[g(v_i\ot 1_G)g^{-1},\ ^gv_j\ot 1_G]&\\
&=& &g\, [v_i\ot 1_G, g^{-1}(^gv_j\ot 1_G)]&
&=& &g\, [v_i\ot 1_G, v_j\ot g^{-1}]&\\
&=& &g\, [v_i\ot 1_G, (v_j\ot 1_G)\, g^{-1}]&
&=& &g\, [v_i\ot 1_G, v_j\ot 1_G]\, g^{-1}.&
\end{aligned}
$$
But this last expression is $^g(\kappa(v_i, v_j))$
(with $G$-action on $\kappa$
induced from the adjoint action~(\ref{adjointaction})), and thus $\kappa$ is $G$-invariant.

We argue that Condition~(2$'$) follows from the fact that
the color Lie bracket is $A$-graded.
Suppose the coefficient of $v_r\ot g$ in $\kappa(v_i, v_j)$
is nonzero (so $i\neq j$) for some $1\leq r\leq n$ and $g\in G$.  Then 
as $|v_i| |v_j|=|\kappa(v_i, v_j)|$ in $A$,
we have $|v_i| |v_j| = |v_r\ot g|$.
Condition~(2$'$) then follows since $\epsilon$ is a bicharacter, $|1_G|=1_A$,
and $L$ is Yetter-Drinfeld:
$$
\begin{aligned}
q_{rk}\ \chi_k(g) 
&= \epsilon(|v_r|, |v_k|)\ \epsilon(|g|, |v_k|)
=\epsilon(|v_r|  |g|, |v_k|  |1_G|)
=\epsilon(|v_r\ot g|, |v_k \ot 1|)\\
&=\epsilon(|v_i| |v_j|, |v_k|)
=\epsilon(|v_i|, |v_k|)\ \epsilon( |v_j|, |v_k|)
=q_{ik}\ q_{jk}\, .
\end{aligned}
$$
\end{proof}

By the definition of the quantum Drinfeld orbifold algebra $\Hqk$,
we immediately conclude that the universal enveloping algebra $\U(L)$
of a Yetter-Drinfeld color Lie ring $L$
is a PBW deformation of $S_{\bf{q}}(V)\rtimes G$:

\begin{cor}\label{lastcor}
  Let $L=V\ot \kG$ be a Yetter-Drinfeld $(A,\epsilon)$-color Lie ring
  with purely positive part.  Then
$\U(L)$ has the PBW property.
\end{cor}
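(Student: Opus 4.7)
The plan is to derive this corollary directly from Theorem~\ref{thm:main2} together with the definition of a quantum Drinfeld orbifold algebra. Recall that in the paper, the name quantum Drinfeld orbifold algebra is reserved for those quotients $\cH_{{\bf q},\kappa}$ of the form~(\ref{eqn:Hqk}) that already satisfy the PBW property; so once we have $\U(L)$ realized as such an algebra, the PBW statement transfers along the isomorphism.

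Concretely, I would proceed in three short steps. First, apply Theorem~\ref{thm:main2} to the given Yetter-Drinfeld color Lie ring $L = V \otimes \kG$ with purely positive part: this provides an $A$-homogeneous basis $v_1,\ldots,v_n$ of $V$, a quantum system of parameters $q_{ij} = \epsilon(|v_i|,|v_j|)$, and a quantum antisymmetric parameter function $\kappa(v_i,v_j) = [v_i \otimes 1_G, v_j \otimes 1_G]$, together with an algebra isomorphism $\U(L) \cong \cH_{{\bf q},\kappa}$. Second, invoke the content of Theorem~\ref{thm:main2} — its proof verifies Conditions~(1$'$),~(2$'$),~(3$'$) of Corollary~\ref{PBWconditions'} — which ensures that $\cH_{{\bf q},\kappa}$ actually deserves the name quantum Drinfeld orbifold algebra; that is, the image of $\{v_1^{m_1}\cdots v_n^{m_n} g : m_i \in \NN, g \in G\}$ is a $\k$-basis. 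Third, pull this PBW basis back along the isomorphism $\U(L) \xrightarrow{\sim} \cH_{{\bf q},\kappa}$ to obtain the desired spanning set for $\U(L)$; since the isomorphism sends the generators $v_i \otimes 1_G$ to $v_i$ and is $\kG$-linear, the PBW monomials in $\U(L)$ correspond exactly to those in $\cH_{{\bf q},\kappa}$.

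There is essentially no obstacle beyond bookkeeping: all the serious work — constructing $\kappa$ and ${\bf q}$, checking $G$-invariance of $\kappa$, the vanishing condition, and the color Jacobi identity — is already packaged inside Theorem~\ref{thm:main2} via the hypothesis that $L$ is Yetter-Drinfeld, $A$-graded, and purely positive (the last condition is what forces $q_{ii}=1$, so that ${\bf q}$ really is a quantum system of parameters, and it is also what lets the $\epsilon$-Jacobi identity translate cleanly into Condition~(3$'$)). Thus the corollary is, as advertised, an immediate consequence, and the only thing to write out carefully is the translation between the statement that $\cH_{{\bf q},\kappa}$ is a quantum Drinfeld orbifold algebra and the PBW property for $\U(L)$.
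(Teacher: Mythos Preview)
Your proposal is correct and follows exactly the paper's own argument: the paper simply notes that by Theorem~\ref{thm:main2}, $\U(L)\cong\cH_{{\bf q},\kappa}$ for some quantum Drinfeld orbifold algebra, and since that name by definition entails the PBW property, the corollary follows immediately. Your additional remarks on the role of the purely positive hypothesis are accurate and consistent with the paper's discussion preceding Theorem~\ref{thm:main2}.
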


We collect our main results from
this section and last,
making precise the connection between the 
universal enveloping algebras of Yetter-Drinfeld color Lie rings and  
quantum Drinfeld orbifold algebras.  
Compare with~\cite[Theorem~3.9]{ShroffWitherspoon} in the special case $G=1$. 
Recall that in any color Lie ring $L$,
$\epsilon(|x|, |x|)=1$ 
forces $[x, x]=0$ for $x$ in $L$.
The following statement is now a consequence of
Theorem~\ref{DOAsAreColored} 
and Theorem~\ref{thm:main2}. 
\begin{thm}\label{thm:main-recap}
Let $G$ be a finite abelian group acting diagonally
on $V=\k^n$.
\begin{itemize}
\item[(i)]
The universal enveloping algebra
of any Yetter-Drinfeld color Lie ring
with purely positive part
is isomorphic to a
quantum Drinfeld orbifold algebra $\cH_{{\bf q},\kappa}$.
\vspace{1ex}
\item[(ii)]
Any quantum Drinfeld orbifold algebra $\cH_{{\bf q},\kappa}$
with $\kappa: V\times V\rightarrow V\ot  \k G$ 
satisfying the strong vanishing condition~(\ref{strongvanishingcondition})
is isomorphic to the universal enveloping algebra of some
Yetter-Drinfeld color Lie ring
with purely positive part.
\end{itemize}
\end{thm}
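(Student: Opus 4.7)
The statement is an immediate packaging of the two main theorems proved in Sections~\ref{SecondMainThm} and~\ref{gradedcolorLierings}: Theorem~\ref{thm:main2} will deliver part~(i) and Theorem~\ref{DOAsAreColored} will deliver part~(ii), so my plan is to invoke them directly and then tidy up one small verification.

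For part~(i), I would apply Theorem~\ref{thm:main2} verbatim. Given a Yetter-Drinfeld $(A,\epsilon)$-color Lie ring $L=V\ot\kG$ with purely positive part, the theorem constructs an explicit parameter function $\kappa(v_i,v_j)=[v_i\ot 1_G,v_j\ot 1_G]$ and quantum system of parameters $q_{ij}=\epsilon(|v_i|,|v_j|)$ for which the universal enveloping algebra $\mathcal{U}(L)$ is isomorphic to the quantum Drinfeld orbifold algebra $\cH_{{\bf q},\kappa}$. No further argument is required here.

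For part~(ii), I would appeal to Theorem~\ref{DOAsAreColored}, which takes the strong vanishing condition as input, produces the normal subgroup $N\subset A=\ZZ^n\times G$, shows that the bicharacter $\epsilon$ descends to $A/N$, and equips $L=V\ot\kG$ with a Yetter-Drinfeld $(A/N,\epsilon)$-color Lie ring structure satisfying $\mathcal{U}(L)\iso\cH_{{\bf q},\kappa}$. The only residual task is to check that this $L$ has purely positive part. This will follow from the explicit formula in Theorem~\ref{DOAsAreColored}(a): for each basis vector $v_i$ of $V$, viewed as an $A/N$-homogeneous element via $|v_i|=(a_i,1_G)$, one computes
\[
\epsilon(|v_i|,|v_i|)=q_{ii}\,\chi_i(1_G)\,\chi_i^{-1}(1_G)=q_{ii}=1,
\]
because every quantum system of parameters satisfies $q_{ii}=1$ by definition. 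Hence $V_-=0$, so $L=L_+$ as required.

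The hardest part is already behind us: Theorem~\ref{thm:main2} carried out the PBW verification via Corollary~\ref{PBWconditions'} (deducing Conditions~($1'$),~($2'$),~($3'$) from $\kG$-bilinearity/balancedness, the $\epsilon$-Jacobi identity, and the $A$-grading together with the Yetter-Drinfeld axioms), while Theorem~\ref{DOAsAreColored} did the work of showing that the subgroup $N$ is large enough for $\epsilon$ to descend and small enough that $\kappa$ remains $A/N$-graded. The only point one could trip over is the purely-positive check in (ii), and that reduces cleanly to the normalization $q_{ii}=1$.
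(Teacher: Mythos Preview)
Your proposal is correct and matches the paper's approach exactly: the paper simply states that Theorem~\ref{thm:main-recap} is a consequence of Theorem~\ref{DOAsAreColored} and Theorem~\ref{thm:main2}. Your additional explicit check that the color Lie ring produced in part~(ii) has purely positive part (via $\epsilon(|v_i|,|v_i|)=q_{ii}=1$) is a detail the paper leaves implicit, and your verification is clean and correct.
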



\vspace{2ex}

\section*{Acknowledgments}
This project began at the 
Women in Noncommutative Algebra and Representation Theory (WINART)
Workshop at Banff International Research Station (BIRS) in March 2016.
We thank BIRS for providing a very productive work environment. 
Kanstrup gratefully acknowledges the support of the Max Planck
Institute for Mathematics, Bonn.
Kirkman was partially supported by Simons Foundation grant
208314, Shepler by Simons Foundation grant 429539, 
and Witherspoon by NSF grants DMS-1401016 and DMS-1665286.


\end{document}